\documentclass[12pt,reqno]{amsart}

\usepackage{amsmath,amssymb,amsthm,mathtools}
\usepackage{enumitem}
\usepackage{microtype}
\usepackage[colorlinks=true,linkcolor=blue,citecolor=blue,urlcolor=blue]{hyperref}

\newtheorem{theorem}{Theorem}[section]
\newtheorem{proposition}[theorem]{Proposition}
\newtheorem{lemma}[theorem]{Lemma}

\theoremstyle{remark}
\newtheorem{remark}[theorem]{Remark}

\numberwithin{equation}{section}

\newcommand{\R}{\mathbb R}
\newcommand{\D}{\mathbb D}
\newcommand{\Ree}{\operatorname{Re}}
\newcommand{\Imm}{\operatorname{Im}}
\newcommand{\e}{\mathrm e}
\newcommand{\ii}{\mathrm i}
\newcommand{\logtwo}{\log_2}
\newcommand{\logthree}{\log_3}

\begin{document}

\title[Large zeta sums]{Large zeta sums and zeros of the Riemann zeta function}

\author{Zikang Dong}
\author{Ruihua Wang}
\author{Weijia Wang}
\author{Hao Zhang}

\address[Zikang Dong]{School of Mathematical Sciences, Soochow University, Suzhou 215006, P. R. China}
\address[Ruihua Wang]{School of Fundamental Sciences, Hainan Bielefeld University of Applied Sciences, Danzhou 578101, P. R. China}
\address[Weijia Wang]{School of Mathematics, Shandong University, Jinan 250010, P. R. China}
\address[Hao Zhang]{School of Mathematics, Hunan University, Changsha 410082, P. R. China}

\email{zikangdong@gmail.com}
\email{ruih.wan9@gmail.com}
\email{weijiawang@amss.ac.cn}
\email{zhanghaomath@hnu.edu.cn}

\date{}

\begin{abstract}
For real $t$ and $x\ge 1$, set
\[
S(x,t)=\sum_{n\le x} n^{\ii t}.
\]
We prove an unconditional inverse theorem relating large values of $S(x,t)$, with $|t|$ large, to zeros of the Riemann zeta function near height $t$.  More precisely, if $T\le |t|\le 2T$, $\exp(\sqrt{\log T})\le x\le \sqrt T$, and $|S(x,t)|=x/N$ with $N\le (\log x)^{1/100}$, then for every $cN^6\le L\le (\log x)/2$ a disk centered at $1+\ii\phi$, where $|\phi-t|\ll N$, contains at least $L/360$ zeros of $\zeta(s)$.  As a consequence, a quantitative restriction on zeros in a short family of arbitrarily thin fixed windows to the left of the line $\Ree s=1$ yields $S(x,t)\ll x/(\log x)^{1/100}$ in polynomial ranges of $x$.  The proof adapts the zero-forcing mechanism of Granville and Soundararajan for large character sums.  In the zeta setting the spectral height is shifted by $t$, and an additional residue from the pole of $\zeta(s)$ appears in the Gaussian transform; in the range considered here that residue is exponentially small.
\end{abstract}

\subjclass[2020]{Primary 11M06; Secondary 11L07, 11M26.}
\keywords{Riemann zeta function, exponential sums, zeros of zeta, pretentious multiplicative functions}

\maketitle

\section{Introduction}

For real $t$ and $x\ge1$, consider the exponential sum
\begin{equation}\label{eq:def-S}
S(x,t):=\sum_{n\le x}n^{\ii t}.
\end{equation}
The Dirichlet series associated with the completely multiplicative function $n\mapsto n^{\ii t}$ is
\[
\sum_{n=1}^{\infty}\frac{n^{\ii t}}{n^s}=\zeta(s-\ii t),
\qquad \Ree s>1.
\]
Thus the natural spectral height associated with a large value of $S(x,t)$ is not height zero, but height approximately $t$.  Our aim is to make this relation quantitative.

Sums of the form $\sum_{n\le x}n^{-\ii t}$ are already closely tied to the classical Lindel\"of hypothesis.  Gonek, Graham and Lee \cite{GGL20} recall the equivalent formulation that, for every fixed $B,\delta>0$,
\[
\sum_{n\le x}n^{-\ii t}
=\frac{x^{1-\ii t}}{1-\ii t}+O_{\delta,B}(x^{1/2}|t|^{\delta})
\qquad (1\le x\le |t|^B).
\]
In  earlier work of  part of the authors \cite{DWZ23}, we studied asymptotic approximations and large-value lower bounds for zeta sums in several ranges of $x$.  More recently, Yang \cite[Theorem~5]{Yang24} proved, under the Riemann hypothesis, a smooth-number approximation and a corresponding upper bound for these partial sums in a broad range of $x$ and $t$.  The typical and averaged sizes of zeta sums have also been studied by Harper \cite{Harper23}, while Gao \cite{Gao26} obtained conditional upper bounds for their moments.  These results concern direct upper bounds, moments, or large-value constructions.  The question considered here is different: we ask what the existence of one abnormally large sum forces about the zero set of $\zeta(s)$.  Thus the main contribution is the pointwise inverse implication, rather than the conditional upper bound by itself. Related analogues for sums of Fourier coefficients of cusp forms were studied by Lamzouri \cite{Lam}, and more recently by by Fréchette, Gerbelli-Gauthier, Hamieh and Tanabe \cite{FGHT}.

The argument is modeled on the work of Granville and Soundararajan on large character sums and zeros of Dirichlet $L$-functions \cite{GS18}.  Their method combines Hal\'asz's theorem and Lipschitz estimates for mean values of multiplicative functions with a Gaussian transform and a zero-repulsion estimate obtained from the Hadamard product.  For the present function $n^{\ii t}$ there are two changes that must be kept explicit.  First, the associated Dirichlet series is $\zeta(s-\ii t)$, so all zero neighborhoods are translated to height approximately $t$.  Second, $\zeta$ has a pole at $s=1$, and the corresponding residue appears when the contour in the Gaussian transform is shifted.

Our main result is the following zero-forcing theorem.

\begin{theorem}\label{thm:main}
There are absolute constants $c>0$ and $T_0\ge3$ with the following property.  Let $T\ge T_0$ and suppose that
\[
T\le |t|\le 2T,
\qquad
\exp(\sqrt{\log T})\le x\le \sqrt T.
\]
Assume that
\[
|S(x,t)|=\frac{x}{N},
\qquad
1\le N\le (\log x)^{1/100}.
\]
Then there is a real number $\phi$ satisfying
\[
|\phi-t|\le cN
\]
such that, for every parameter $L$ in the range
\[
cN^6\le L\le \frac12\log x,
\]
the disk
\begin{equation}\label{eq:main-disk}
\left\{s:\ |s-(1+\ii\phi)|<L\frac{\log T}{(\log x)^2}\right\}
\end{equation}
contains at least $L/360$ non-trivial zeros of $\zeta(s)$, counted with multiplicity.
\end{theorem}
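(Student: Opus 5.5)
We follow the Granville--Soundararajan scheme \cite{GS18}, with the multiplicative function $f(n)=n^{\ii t}$ and Dirichlet series $F(s)=\zeta(s-\ii t)$. The argument has three steps, in order.

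\textbf{Step 1: from a large sum to a large Euler product.} Apply Hal\'asz's theorem in its Montgomery--Tenenbaum form:
\[
\frac{|S(x,t)|}{x}\ll (1+M)e^{-M}+\frac{1}{\log x},
\qquad
e^{-M}\asymp\max_{|\tau|\le \log x}\frac{|F(1+1/\log x+\ii\tau)|}{\log x}.
\]
Since $|S|=x/N$, this gives $M\le \log N+O(\log\log N)$. Hence there is $\tau_0$ with $|F(1+1/\log x+\ii\tau_0)|\gg (\log x)/N^{1+o(1)}$.

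Next, the Lipschitz estimates for mean values of multiplicative functions (Granville--Soundararajan) localize $\tau_0$. They show that $f(n)n^{-\ii\tau_0}$ has mean value of size $\gg 1/N^{O(1)}$ over ranges $x^{\theta}$. Comparing with the known behaviour of $\sum_{n\le y}n^{\ii(t-\tau_0)}\approx y^{1+\ii(t-\tau_0)}/(1+\ii(t-\tau_0))$ forces $|\tau_0|\ll N$. More precisely, a large value of $|\sum_{n\le x}n^{\ii t}|$ with $|t|\ge T\ge x^2$ means that $n^{\ii t}$ pretends to be $n^{\ii\psi}$ with $|\psi-t|\ll N$. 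We set $\phi=t+\tau_0$ (up to sign conventions).

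\textbf{Step 2: a lower bound for a Gaussian-weighted prime sum.} Take logarithmic derivatives. Writing $s_0=1+\ii\phi$, one obtains from Step 1, again via the Lipschitz argument on scales $x^{1/2}\ge y\ge e^{L}$,
\[
\Ree\sum_{n}\frac{\Lambda(n)}{n^{1+\ii\phi}}\,W\!\Bigl(\frac{\log n}{\log x}\Bigr)\ \ge\ -c'\log N\cdot(\text{main scale}),
\]
with $W$ a Gaussian-type weight of width $\sim \sqrt{L}/\log x$. The sum should in fact be close to its maximal negative size, namely comparable to $-\log x$ times a positive constant, because $\sum_{p}p^{-1-\ii\phi}$ must exhibit the same pretentious behaviour up to $x$.

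Now express this sum as a contour integral of $-\frac{\zeta'}{\zeta}(s+\ii\phi)$ against the Mellin transform of $W$, which is a Gaussian $\exp((s-1)^2\,\cdot(\log x)^2/L)$. Shift the contour left to $\Ree s=1-\delta$. The residues are of two kinds.
\begin{enumerate}[label=(\roman*)]
\item The zeros $\rho$ of $\zeta$ near $s_0$, each contributing $\widehat W(\rho-s_0)$.
\item The pole of $\zeta$ at $s=1$, which sits at distance $|\phi|\asymp T$ from $s_0$. Its contribution is $\exp(-c T^2(\log x)^2/L)$ and hence negligible. This is the new residue term mentioned in the abstract.
\end{enumerate}
The remaining integral on $\Ree s=1-\delta$ is controlled using $\zeta'/\zeta(s)=\sum_{|\gamma-\Imm s|\le1}(s-\rho)^{-1}+O(\log T)$ and the Gaussian decay.

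\textbf{Step 3: zero counting via the Hadamard product.} Zeros with $|\rho-s_0|>L\log T/(\log x)^2$ contribute at most a small fraction. To see this, sum the Gaussian decay against the zero count $N(T+1)-N(T)\ll\log T$. This is where the disk radius $L\log T/(\log x)^2$ comes from: the Gaussian width squared times $\log T$ must beat the density of zeros. The zeros inside the disk contribute at most $O(1)$ each in absolute value, normalized appropriately. Hence the lower bound from Step 2, of size $\gg L$ after normalization, forces at least $L/360$ zeros in the disk. The constant $360$ comes from tracking the explicit constants.

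The conditions used are the following.
\begin{itemize}
\item $L\ge cN^6$ ensures that the main term dominates the loss $N^{O(1)}$ coming from Steps 1--2.
\item $L\le\frac12\log x$ keeps the Gaussian scale inside $[1,x]$.
\item $x\ge\exp(\sqrt{\log T})$ makes $\log T/(\log x)^2\le 1/\log T$ small enough that the disk stays in the region where the above zero-density bounds are valid.
\end{itemize}

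\textbf{Main obstacle.} We expect the main obstacle to be Step 2: extracting, from the single large value $|S(x,t)|$, a lower bound for the smoothed prime sum at $1+\ii\phi$ that is uniform in $L$ across the whole range. That is, we need the Lipschitz/repulsion estimates to propagate pretentiousness from scale $x$ down to scale $e^{L}$ while losing only $N^{O(1)}$. We would try first to follow \cite{GS18} verbatim, with $\chi$ replaced by $n^{\ii t}$, checking at each point that the height shift to $t$ changes only the $\log q$ factors into $\log T$.
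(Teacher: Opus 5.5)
\textbf{There is a genuine gap.} It sits in your Step 2, which you flag but do not supply, and it cannot be filled as described.

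Hal\'asz gives only $M=\sum_{p\le x}(1-\cos(\phi\log p))/p\le\log N+O(\log\log N)$. This is an average over $\log\log$-scales. The primes in $(x^{1/\sqrt N},x]$ carry only about $\frac12\log N$ of $\sum 1/p$, so they may be completely unbiased within this budget. Partial summation for $\sum_p\frac{\log p}{p^{1+\lambda}}(1-\cos(\phi\log p))$ bounds the deviation only by $\asymp M$ times the main term, which is useless once $M\gg1$. So no lower bound of order $\log x$ (or $1/\lambda$) for a smoothed sum $\sum\Lambda(n)n^{-1-\ii\phi}W$ at a prescribed scale follows, let alone for every $L\in[cN^6,\frac12\log x]$. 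The Lipschitz estimate is a statement about the partial sums $S(\e^y,\tau)$ and does not transfer to primes.

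The sign is also reversed: $p^{\ii\phi}\approx1$ makes that sum large and positive. To the right of $\Ree s=1$ this means, by $\sum_\rho\Ree\frac1{s-\rho}=\frac12\log T+\Ree\frac{\zeta'}{\zeta}(s)+O(1)$, fewer zeros, not more.

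Step 3 fails independently. The kernel $\exp((s-1)^2(\log x)^2/L)$ grows like $\exp((1-\sigma)^2(\log x)^2/L)$ to the left. The integral on $\Ree s=1-\delta$ therefore carries a factor up to $\exp(\delta^2(\log x)^2/L)$. A zero of your disk near $1-r+\ii\phi$, with $r=L\log T/(\log x)^2$, gets weight about $\exp(r^2(\log x)^2/L)=\exp(L(\log T)^2/(\log x)^2)\ge\e^{4L}$, not $O(1)$. The count $L/360$ cannot come out of this.

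Two smaller points. For $|t-\tau_0|\asymp T\ge y^2$ the approximation $\sum_{n\le y}n^{\ii(t-\tau_0)}\approx y^{1+\ii(t-\tau_0)}/(1+\ii(t-\tau_0))$ is false. The bound $|t_0|\ll N$ should instead come from Hal\'asz in the form carrying the factor $1/(1+|t_0|)$. Also, $x\ge\exp(\sqrt{\log T})$ gives only $\log T/(\log x)^2\le1$, not $\le 1/\log T$.

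The paper never passes to primes or to $\zeta'/\zeta$.
\begin{enumerate}
\item With $\tau=t-t_0$, it integrates the Lipschitz bound $|S(\e^y,\tau)/\e^y-S(x,\tau)/x|\ll((1+|y-y_0|)/y_0)^{1/3}$ against $\exp(\lambda y-\lambda y^2/(2y_0))$. This is a Gaussian centred at $y_0$ of width $\sqrt{y_0/\lambda}$. The main term $\gg\e^{\lambda y_0/2}/N$ survives exactly when $\lambda y_0\gg N^6$, which is where $cN^6$ comes from.
\item Fourier inversion turns this into $\int\zeta(1-\lambda+\ii(\xi-\tau))(1-\lambda+\ii\xi)^{-1}\e^{-\xi^2y_0/(2\lambda)}\,d\xi$ plus the pole residue, which is $\ll T^{-1}\exp(\lambda y_0/2-\tau^2y_0/(2\lambda))$. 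Because the integrand is $\zeta$ itself, no zeros are ever crossed.
\item The crude bound $|\zeta(1-\lambda+\ii v)|\ll(2+|v|)^\lambda/\lambda$ then locates a height $u$ with $|u-\tau|\le2\lambda\sqrt{\log T/y_0}$ and $|\zeta(1-\lambda+\ii u)|\gg\lambda^{-1}(\lambda y_0)^{1/3}\e^{\lambda y_0/2}$.
\item The Hadamard-product inequality $|\zeta(1-\lambda+\ii u)|\ll\lambda^{-1}\exp(\sum_\rho2\lambda^2|1+\lambda+\ii u-\rho|^{-2})$ converts this into $\sum_\rho\lambda|1+\lambda+\ii u-\rho|^{-2}\ge y_0/4$.
\item Take $\lambda=L/(40y_0)$ and $a=20\lambda\log T/y_0$. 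Zeros outside radius $2a=L\log T/(\log x)^2$ contribute at most $\frac{5\lambda}{a}\sum_\rho\Ree\frac1{1+a+\ii\phi-\rho}\le\frac{y_0}{4\log T}\bigl(\frac12\log T+\frac1a+O(1)\bigr)\le\frac{5y_0}{36}$. This comparison of a Poisson-type kernel with the zero density is the real source of the radius.
\item Each zero contributes at most $1/\lambda$, so at least $\lambda y_0/9=L/360$ zeros lie in the disk.
\end{enumerate}
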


The center in \eqref{eq:main-disk} is necessarily close to $1+\ii t$.  Here \(t_0\) plays the role of the small twisting parameter \(\phi\) in \cite[Theorem~1.3]{GS18}. Because the associated Dirichlet series is \(\zeta(s-it)\), the corresponding zero cluster is shifted to height \(t-t_0\), which is necessarily close to \(t\).

The most informative part of Theorem \ref{thm:main} is the regime in which the disk radius is genuinely smaller than the horizontal distance from the line $\Ree s=1$ to the critical line.  Writing $y_0=\log x$ and $Q=\log T$, this means
\[
L\frac{Q}{y_0^2}<\frac12.
\]
This subrange is non-empty whenever $y_0^2\gg N^6Q$.  It is precisely the regime used in Theorem \ref{thm:corollary}: the choice of $L$ there makes the radius at most $\delta$.  We therefore view the zero-forcing statement primarily as a local near-$1$ result, even though the theorem is valid for the full stated range of $L$.

As a consequence we obtain a flexible local zero criterion.  The parameter $\delta$ below measures the width of the zero window and may be any fixed positive number at most $1/4$.  Thus the hypothesis may be imposed in a strip arbitrarily close to the line $\Ree s=1$.

\begin{theorem}\label{thm:corollary}
There is an absolute constant $C>0$ with the following property.  Fix $A>0$ and $0<\delta\le1/4$.  There exists $T_0(A,\delta)\ge3$ such that, whenever $T\ge T_0(A,\delta)$,
\[
T\le |t|\le2T,
\qquad
\varepsilon>(\log T)^{-1/3},
\]
the following holds.  Suppose that, for every real $u$ with
\[
|u-t|\le C(\log T)^{1/100},
\]
the region
\begin{equation}\label{eq:zero-window}
\left\{s:\ \Ree s\ge1-\delta,\quad |\Imm s-u|\le\delta\right\}
\end{equation}
contains at most
\[
\frac{\delta\varepsilon^2\log T}{400}
\]
non-trivial zeros of $\zeta(s)$, counted with multiplicity.  Then, uniformly for
\[
T^\varepsilon\le x\le T^A,
\]
we have
\begin{equation}\label{eq:conditional-bound}
|S(x,t)|\ll_A \frac{x}{(\log x)^{1/100}}.
\end{equation}
\end{theorem}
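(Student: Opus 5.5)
We derive Theorem~\ref{thm:corollary} from Theorem~\ref{thm:main} by contradiction. Fix $A$, $\delta$, and $x$ with $T^\varepsilon\le x\le T^A$, and suppose $|S(x,t)|=x/N$ with $N\le(\log x)^{1/100}$; otherwise \eqref{eq:conditional-bound} already holds.

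The first step is to place ourselves in the range of Theorem~\ref{thm:main}. That theorem needs $\exp(\sqrt{\log T})\le x\le\sqrt T$. The lower bound holds because $\log x\ge\varepsilon\log T>(\log T)^{2/3}\ge\sqrt{\log T}$. For the upper bound, when $x>\sqrt T$ we cannot apply the theorem directly, so we re-scale the height. We use the relation $T'\le|t|\le 2T'$ with $x\le\sqrt{T'}$ only through $\zeta(s-\ii t)$ at height $t$. Inspecting the statement, $T$ enters the conclusion only through the radius $L\log T/(\log x)^2$, and the theorem as stated requires $x\le\sqrt T$. So for $x\in(\sqrt T,T^A]$ we instead reduce to a shorter sum. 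By partial summation, or by the Lipschitz estimate for mean values of multiplicative functions used in the proof of Theorem~\ref{thm:main}, a large value $|S(x,t)|=x/N$ gives $|S(x',t)|\gg x'/N^{O(1)}$ for some $x'=x^{\theta}\le\sqrt T$ with $\theta\asymp_A1$. The cleaner option is to apply the zero-forcing argument with $T$ replaced by an effective height; implicit constants then depend on $A$. I expect this range extension to be the main technical obstacle. My first attempt will be the Lipschitz estimate $\left|\frac{1}{x}\sum_{n\le x}f(n)-\frac{x^{\ii\phi}}{w^{\ii\phi}}\frac1w\sum_{n\le w}f(n)\right|\ll(\log(x/w)/\log x)^{1-2/\pi}\log(\ldots)$, applied with $w=x^{1/(2A)}\le\sqrt T$. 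Since $\log w\asymp_A\log x$, this costs only a constant power of $N$.

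Once in the admissible range with some $x_1$, where $\log x_1\asymp_A\log x\ge\varepsilon\log T$ and $N_1\ll N$, Theorem~\ref{thm:main} gives $\phi$ with $|\phi-t|\le cN_1\le C(\log T)^{1/100}$, provided $C$ is chosen large. We choose
\[
L=\frac{\delta(\log x_1)^2}{\log T}.
\]
This choice has three properties. First, the radius is exactly $\delta$. Second, $L\le\delta\log x_1\le\frac12\log x_1$. Third, $L\ge\delta\varepsilon_1^2\log T$ with $\varepsilon_1=\log x_1/\log T\gg_A\varepsilon$, and this is $\ge cN_1^6$ because $N_1^6\ll(\log T)^{6/100}$ while $\delta\varepsilon^2\log T\ge\delta(\log T)^{1/3}$, once $T\ge T_0(A,\delta)$.

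The disk of radius $\delta$ about $1+\ii\phi$ then contains at least $L/360$ zeros. Since zeros satisfy $\Ree s<1$, this disk lies inside the window \eqref{eq:zero-window} with $u=\phi$. Hence that window holds at least $\delta(\log x_1)^2/(360\log T)$ zeros. When $x_1=x$, this is at least $\delta\varepsilon^2\log T/360>\delta\varepsilon^2\log T/400$, contradicting the hypothesis. In the reduced case, $\log x_1$ must be kept comparable to $\log x$ with a ratio close enough to $1$ that the $360$ versus $400$ margin survives. Alternatively, we absorb the loss by choosing $L$ slightly smaller and noting that the window hypothesis is imposed for every $x\ge T^\varepsilon$, including the reduced one. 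This contradiction gives $N>(\log x)^{1/100}$, which is \eqref{eq:conditional-bound}.
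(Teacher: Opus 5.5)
Your argument for $T^\varepsilon\le x\le\sqrt T$ is correct and is essentially the paper's. The paper takes $L=\delta\varepsilon^2\log T$ and you take $L=\delta(\log x)^2/\log T$; both give a disk of radius at most $\delta$ inside one window, carrying at least $\delta\varepsilon^2\log T/360$ zeros.

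The gap is the range $\sqrt T<x\le T^A$, where your reduction to Theorem~\ref{thm:main} does not work.

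\emph{The transfer step fails.} The Lipschitz bound \eqref{eq:lipschitz-standard}, or \eqref{eq:lipschitz-y}, controls the change of the normalized mean by $\bigl((1+|\log x-\log w|)/\log x\bigr)^{1-2/\pi+o(1)}$.
\begin{itemize}
\item For $w=x^{1/(2A)}$ this is $\asymp_A 1$, not $o(1/N)$. For $A>1$ such a $w$ also lies outside the range $\sqrt x\le w\le x^2$ where \eqref{eq:lipschitz-standard} is stated.
\item The bound only guarantees that a lower bound of size $1/N$ survives under shifts with $|\log x-\log w|\ll\log x/N^{3}$. So it says nothing about $|S(x^{1/(2A)},t)|$.
\item A loss $N_1=N^{O(1)}$ would in any case violate the requirement $N_1\le(\log x_1)^{1/100}$ of Theorem~\ref{thm:main}.
\end{itemize}

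\emph{The count cannot beat the hypothesis unless $\varepsilon$ is small.} Even granting a transfer, take $x_1\le\sqrt T$ and require radius at most $\delta$.
\begin{itemize}
\item You need $L\le\delta(\log x_1)^2/\log T\le\delta\log T/4$. So Theorem~\ref{thm:main} forces at most $\delta\log T/1440$ zeros.
\item The hypothesis permits $\delta\varepsilon^2\log T/400$ zeros, which is larger once $\varepsilon^2>5/18$.
\item When $\varepsilon>1/2$, every admissible $x$ exceeds $\sqrt T$, so for such $\varepsilon$ your argument proves nothing.
\item Your fallback, that the window hypothesis is ``imposed for every $x\ge T^\varepsilon$'', misreads the statement. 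The hypothesis does not involve $x$ at all. It becomes weaker as $\varepsilon$ grows, so you cannot pass to a smaller $\varepsilon$.
\end{itemize}

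\emph{The missing idea.} For $x\ge\sqrt T$ no information about zeros is needed, because $S(x,t)$ is unconditionally small. The paper's Lemma~\ref{lem:large-x} uses two classical estimates.
\begin{itemize}
\item For $X\le|t|$, the exponent pair $(1/6,2/3)$ on dyadic blocks gives $\sum_{X<n\le2X}n^{\ii t}\ll|t|^{1/6}X^{1/2}$. Summing, $S(x,t)\ll T^{1/6}x^{1/2}\ll xT^{-1/12}$ for $\sqrt T\le x\le|t|$.
\item For $X\ge|t|$, the Kusmin--Landau lemma gives $\sum_{X<n\le2X}n^{\ii t}\ll X/|t|$, which handles larger $x$.
\end{itemize}
Together these give $S(x,t)\ll xT^{-1/13}$. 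Since $\log x\le A\log T$, this is $\ll_A x(\log x)^{-1/100}$, which is exactly where the dependence on $A$ enters. Splitting at $x=\sqrt T$ this way, and using Theorem~\ref{thm:main} only below $\sqrt T$, completes the proof.
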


Taking $\delta=1/4$ recovers the concrete window $\Ree s\ge3/4$, $|\Imm s-u|\le1/4$ and the zero threshold $\varepsilon^2\log T/1600$.  Under the Riemann hypothesis the hypothesis of Theorem \ref{thm:corollary} is automatic for every fixed $\delta<1/2$.  Stronger estimates for these partial sums are already available under RH through smooth-number methods; see \cite[Theorem~5]{Yang24}.  The relevance of Theorem \ref{thm:corollary} is therefore the locality of its zero hypothesis.

\begin{remark}\label{rem:density}
Combining Theorem \ref{thm:corollary} with a zero-density theorem gives exceptional-set statements.  For example, the zero-density estimate of Guth and Maynard \cite{GM26},
\[
N(\sigma,T)\ll T^{15(1-\sigma)/(3+5\sigma)+o(1)},
\]
shows, for any fixed $0<\delta\le1/4$, that the local hypothesis fails only on a set of $t\in[T,2T]$ of measure
\[
T^{15\delta/(8-5\delta)+o(1)}.
\]
We do not emphasize this as a main result: for the comparatively weak threshold in \eqref{eq:conditional-bound}, almost-all bounds can also be approached directly by standard high-moment methods for Dirichlet polynomials.  The distinctive content of Theorem \ref{thm:main} is the pointwise forcing of a local zero cluster from one large value of $S(x,t)$.
\end{remark}

It is important that some upper control on $x$ relative to $t$ is present in Theorem \ref{thm:corollary}.  Indeed, for fixed $t$, Euler summation gives
\begin{equation}\label{eq:euler-large-x}
S(x,t)=\frac{x^{1+\ii t}-1}{1+\ii t}+O(1+|t|\log x),
\end{equation}
so an estimate of the form $S(x,t)\ll x/(\log x)^{1/100}$ cannot hold for arbitrarily large $x$ with $t$ fixed.

The paper is organized as follows.  In Section \ref{sec:mean-values} we recall the required mean-value estimates and specialize them to $n^{\ii t}$.  Section \ref{sec:zeta-tools} develops the zeta-function estimates, including the Gaussian identity with its pole term.  In Section \ref{sec:main-proof} we prove Theorem \ref{thm:main}.  Finally, Section \ref{sec:cor-proof} deduces Theorem \ref{thm:corollary}.

\section{Mean values of multiplicative functions}\label{sec:mean-values}

We use the pretentious framework in the form developed in \cite{GS03,GS18}.  If $f$ and $g$ are multiplicative functions with $|f(n)|,|g(n)|\le1$, define
\[
\D(f,g;x)^2
:=\sum_{p\le x}\frac{1-\Ree(f(p)\overline{g(p)})}{p}.
\]
Let
\[
F(s)=\sum_{n=1}^{\infty}\frac{f(n)}{n^s},
\qquad \Ree s>1,
\]
and, for fixed $x$, choose $\phi=\phi_f(x)$ with $|\phi|\le\log x$ so that
\[
u\mapsto \left|F\left(1+\frac1{\log x}+\ii u\right)\right|
\]
is maximal at $u=\phi$.  Put
\[
M=M_f(x):=\D(f,n^{\ii\phi};x)^2,
\qquad
f_\phi(n):=f(n)n^{-\ii\phi}.
\]
We shall use the following standard estimates.  They are consequences of Hal\'asz's theorem, the mean-value comparison formula, and the Lipschitz estimate for multiplicative functions; see \cite[Section 2]{GS18} and \cite{GS03}.

\begin{lemma}\label{lem:standard-mean}
Let $f$ be multiplicative with $|f(n)|\le1$.  With the notation above, as $x\to\infty$,
\begin{equation}\label{eq:halasz}
\frac1x\left|\sum_{n\le x}f(n)\right|
\ll
\frac{(M+1)\e^{-M}}{1+|\phi|}
+(\log x)^{\sqrt3-2+o(1)}.
\end{equation}
Moreover,
\begin{align}
\frac1x\sum_{n\le x}f(n)
&=\frac{x^{\ii\phi}}{1+\ii\phi}\,
\frac1x\sum_{n\le x}f_\phi(n) \notag\\
&\quad+O\!\left(
\frac{\log(e+|\phi|)}{\log x}
\exp\!\left(\sum_{p\le x}\frac{|1-f_\phi(p)|}{p}\right)
\right),
\label{eq:mean-comparison}
\end{align}
and, uniformly for $\sqrt x\le z\le x^2$,
\begin{equation}\label{eq:lipschitz-standard}
\left|
\frac1x\sum_{n\le x}f_\phi(n)
-
\frac1z\sum_{n\le z}f_\phi(n)
\right|
\ll
\left(\frac{1+|\log x-\log z|}{\log x}\right)^{1-2/\pi+o(1)}.
\end{equation}
Here the $o(1)$ terms are uniform for the indicated range of $z$ and for $1$-bounded multiplicative functions.
\end{lemma}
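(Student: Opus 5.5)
Lemma~\ref{lem:standard-mean} is described as standard, so I would not reprove Hal\'asz's theorem from scratch. Instead I would assemble the three estimates from their known sources, taking care to keep track of the normalization of $\phi$ and of the uniformity in $f$.

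\textbf{Step 1: the Hal\'asz bound \eqref{eq:halasz}.} I would begin with the quantitative Hal\'asz--Montgomery--Tenenbaum inequality in the form
\[
\frac1x\Bigl|\sum_{n\le x}f(n)\Bigr|\ll (1+\mathcal M)\e^{-\mathcal M}+\frac{1}{\log x},
\]
where $\mathcal M$ is defined by
\[
\e^{-\mathcal M}\log x=\max_{|u|\le\log x}\bigl|F(1+1/\log x+\ii u)\bigr|.
\]
Two further ingredients are needed.
\begin{enumerate}[label=(\roman*)]
\item The standard Euler-product comparison gives $|F(1+1/\log x+\ii\phi)|\asymp \log x\,\exp(-\D(f,n^{\ii\phi};x)^2)$, so $\mathcal M=M+O(1)$.
\item The extra saving $1/(1+|\phi|)$ is the Granville--Soundararajan refinement: apply the Hal\'asz argument to $f_\phi$ and use the identity $\sum_{n\le x}f(n)=\int_1^x u^{\ii\phi}\,d\bigl(\sum_{n\le u}f_\phi(n)\bigr)$. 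Integrating by parts gives the factor $1/(1+\ii\phi)$. The error term $(\log x)^{\sqrt3-2+o(1)}$ arises from the secondary maximum of $|F|$ away from $\phi$. By the usual trigonometric argument (Lemma 2.x of \cite{GS18}, which uses $\Ree(1+\cos\theta)$-type repulsion between two distinct twists), $f$ cannot pretend to be two different $n^{\ii u}$. This bounds the second maximum by $(\log x)^{\sqrt3-1+o(1)}$, so after dividing by $\log x$ one obtains the stated error.
\end{enumerate}

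\textbf{Step 2: the comparison formula \eqref{eq:mean-comparison}.} I would write $f=f_\phi\cdot n^{\ii\phi}$ and use the convolution decomposition $f_\phi=1*h$ locally, or equivalently Lemma 7.1 of \cite{GS18}. Partial summation gives
\[
\sum_{n\le x}f(n)=\frac{x^{1+\ii\phi}}{1+\ii\phi}\cdot\frac1x\sum_{n\le x}f_\phi(n)+\text{error}.
\]
The error is controlled by $\sum_{n\le x}|h(n)|/n\le\exp\bigl(\sum_{p\le x}|1-f_\phi(p)|/p\bigr)$, multiplied by the factor $\log(e+|\phi|)/\log x$ coming from $\int (u/x)^{\ii\phi}$-type sums.

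\textbf{Step 3: the Lipschitz estimate \eqref{eq:lipschitz-standard}.} This is the Granville--Soundararajan Lipschitz theorem \cite{GS03} applied to $f_\phi$. It is legitimate because, after twisting, the maximum of $|F_\phi|$ on the relevant line sits at $u=0$, which is exactly the normalization under which that theorem yields the exponent $1-2/\pi$. I would check that the range $\sqrt x\le z\le x^2$ changes $\log x$ only by a constant factor, so that $|\phi|\le\log x$ remains compatible with the maximization at scale $z$. The needed facts are that the maximizer at scale $z$ is within $O(1)$ of $\phi$, and that the resulting change in $f_\phi$ is absorbed into the $o(1)$.

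\textbf{Main obstacle.} I expect the main difficulty to be this uniformity across scales in Step 3, together with the bookkeeping of the $1/(1+|\phi|)$ factor in Step 1. For Step 3 I would cite \cite[Section 2]{GS18} directly, since the result is stated there in exactly this uniform form.
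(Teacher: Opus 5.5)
Your proposal is essentially the paper's proof: the paper does not reprove anything. It cites \cite[Section~2, (2.3) and (2.5)]{GS18}, which rest on \cite[Theorems~2b and~4]{GS03}, for the Hal\'asz and Lipschitz bounds, and obtains the comparison formula by applying the twisting lemma \cite[Lemma~7.1]{GS03} (not \cite{GS18}) to $g=f_\phi$ with $\alpha=\phi$, exactly as in your Step~2. One caution on Step~3: you do not need the claim that the maximizer at scale $z$ stays within $O(1)$ of $\phi$, and that claim can fail when $f$ is not pretentious; the cited estimate is stated with the fixed $\phi=\phi_f(x)$, and it is trivial when $|\log x-\log z|\asymp\log x$.
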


\begin{proof}
The Hal\'asz estimate \eqref{eq:halasz} and the Lipschitz estimate
\eqref{eq:lipschitz-standard} are exactly the forms recorded in
\cite[Section~2, (2.3) and (2.5)]{GS18}, which in turn follow from
\cite[Theorems~2b and~4]{GS03}.

For completeness, the comparison formula deserves one clarification.  Apply
\cite[Lemma~7.1]{GS03} not to $f$ itself, but to
$g=f_\phi=f(n)n^{-\ii\phi}$ with the twisting parameter $\alpha=\phi$.
Since $g(n)n^{\ii\phi}=f(n)$, that lemma gives
\[
\sum_{n\le x}f(n)
=\frac{x^{\ii\phi}}{1+\ii\phi}\sum_{n\le x}f_\phi(n)
+O\!\left(
\frac{x\log(e+|\phi|)}{\log x}
\exp\!\left(\sum_{p\le x}\frac{|1-f_\phi(p)|}{p}\right)
\right),
\]
which is \eqref{eq:mean-comparison} after division by $x$.  Notice that
$|\phi|\le\log x$, so $\log(e+|\phi|)\ll\logtwo x$ whenever the latter
coarser form is convenient.
\end{proof}

We now specialize to
\[
f(n)=n^{\ii t},
\qquad
F(s)=\zeta(s-\ii t).
\]
For fixed $x$ and $t$, let $t_0=t_0(x,t)$ be a point with $|t_0|\le\log x$ at which
\begin{equation}\label{eq:t0-definition}
u\longmapsto
\left|\zeta\left(1+\frac1{\log x}+\ii(u-t)\right)\right|
\end{equation}
is maximal.  Put
\begin{equation}\label{eq:def-M}
M:=\sum_{p\le x}\frac{1-\Ree(p^{\ii(t-t_0)})}{p}.
\end{equation}

\begin{lemma}\label{lem:t0}
There is an absolute constant $x_0\ge3$ such that the following holds for $x\ge x_0$.  Suppose that
\[
|S(x,t)|=\frac{x}{N},
\qquad
1\le N\le(\log x)^{1/100}.
\]
Then
\begin{equation}\label{eq:t0-bound}
|t_0|\ll N
\end{equation}
and
\begin{equation}\label{eq:M-bound}
M\le \frac1{100}\logtwo x+\logthree x+O(1).
\end{equation}
Writing $y_0=\log x$ and $\tau=t-t_0$, we have, for all real $y$,
\begin{equation}\label{eq:lipschitz-y}
\left|
\frac{S(\e^y,\tau)}{\e^y}
-
\frac{S(\e^{y_0},\tau)}{\e^{y_0}}
\right|
\ll
\left(\frac{1+|y-y_0|}{y_0}\right)^{1/3},
\end{equation}
where $S(\e^y,\tau)=0$ when $\e^y<1$.  Finally,
\begin{equation}\label{eq:twisted-relation}
S(x,\tau)
=(1+\ii t_0)x^{-\ii t_0}S(x,t)
+O\!\left(\frac{x}{(\log x)^{3/4}}\right).
\end{equation}
\end{lemma}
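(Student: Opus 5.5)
We apply Lemma~\ref{lem:standard-mean} to $f(n)=n^{\ii t}$, for which $F(s)=\zeta(s-\ii t)$. Comparing \eqref{eq:t0-definition} with the definition of $\phi_f(x)$ shows $\phi=t_0$, $f_\phi(n)=n^{\ii(t-t_0)}=n^{\ii\tau}$ and $M_f(x)=M$ as in \eqref{eq:def-M}. The four assertions then come from the three parts of Lemma~\ref{lem:standard-mean}, taken in order.

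\emph{Bounds for $t_0$ and $M$.} By hypothesis, $1/N\ge(\log x)^{-1/100}$. Insert this into \eqref{eq:halasz}. The error term $(\log x)^{\sqrt3-2+o(1)}$ is at most $(\log x)^{-0.26}$, which is much smaller than $(\log x)^{-1/100}$ for $x\ge x_0$. Hence
\[
\frac1N\ll\frac{(M+1)\e^{-M}}{1+|t_0|}.
\]
Since $(M+1)\e^{-M}\le1$, this gives $1+|t_0|\ll N$, which is \eqref{eq:t0-bound}. It also gives $\e^{M}/(M+1)\ll N\le(\log x)^{1/100}$. Taking logarithms, $M\le\frac1{100}\logtwo x+\log(M+1)+O(1)$. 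Because trivially $M\le 2\logtwo x+O(1)$, we have $\log(M+1)\le\logthree x+O(1)$, and \eqref{eq:M-bound} follows.

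\emph{Lipschitz bound \eqref{eq:lipschitz-y}.} Apply \eqref{eq:lipschitz-standard} to $f_\phi(n)=n^{\ii\tau}$, noting $\sum_{n\le z}f_\phi(n)=S(z,\tau)$. The exponent $1-2/\pi\approx0.363$ exceeds $1/3$, so the factor $(\cdot)^{o(1)}$ is absorbed once $x$ is large. This gives the bound for $y_0/2\le y\le 2y_0$. The remaining ranges are handled trivially. If $|y-y_0|\ge y_0/2$, the right side of \eqref{eq:lipschitz-y} is $\gg1$, while the left side is at most $2$ because $|S(z,\tau)|\le z$. For $\e^y<1$ the sum vanishes by convention, and the same trivial bound applies.

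\emph{Twisted relation \eqref{eq:twisted-relation}.} Use \eqref{eq:mean-comparison} and solve for $S(x,\tau)$ by multiplying through by $(1+\ii t_0)x^{-\ii t_0}$. The error becomes
\[
x\,(1+|t_0|)\,\frac{\log(e+|t_0|)}{\log x}\,\exp\Bigl(\sum_{p\le x}\frac{|1-p^{\ii\tau}|}{p}\Bigr).
\]
By Cauchy--Schwarz and $|1-p^{\ii\tau}|^2=2(1-\Ree p^{\ii\tau})$, we get
\[
\sum_{p\le x}\frac{|1-p^{\ii\tau}|}{p}\le\sqrt{2M\,(\logtwo x+O(1))}.
\]
With \eqref{eq:M-bound} this is at most $\sqrt{2/100}\,\logtwo x+O(\sqrt{\logthree x\,\logtwo x})$, so the exponential is $(\log x)^{0.1415+o(1)}$. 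Using $|t_0|\ll N\le(\log x)^{1/100}$, the total error is
\[
x\,(\log x)^{-1+0.01+0.1415+o(1)}\le x\,(\log x)^{-3/4}.
\]

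The main obstacle is this last step: the exponent bookkeeping must stay below $-3/4$, and it depends essentially on the small constant $1/100$ in \eqref{eq:M-bound}. I expect the rest to be routine.
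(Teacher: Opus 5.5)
Your proposal is correct and follows the paper's proof essentially step for step. You use Hal\'asz for the bounds on $t_0$ and $M$, the Lipschitz estimate near $y_0$ with the trivial bound elsewhere, and Cauchy--Schwarz plus the comparison formula with the same exponent bookkeeping ($\sqrt{2/100}\approx 0.1414$, which the paper rounds up to $1/7$) for \eqref{eq:twisted-relation}. The only cosmetic difference is that you bound $\log(M+1)$ via the trivial estimate $M\le 2\logtwo x+O(1)$, whereas the paper reads off $M\le\log N+\log\log(3N)+O(1)$ directly from $(M+1)\e^{-M}\gg 1/N$.
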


\begin{proof}
Applying \eqref{eq:halasz} to $f(n)=n^{\ii t}$ and using the hypothesis gives
\[
\frac1N
\ll
\frac{(M+1)\e^{-M}}{1+|t_0|}
+(\log x)^{\sqrt3-2+o(1)}.
\]
Since $2-\sqrt3>1/100$ and the $o(1)$ in \eqref{eq:halasz} is uniform, there is an absolute $x_0$ such that for $x\ge x_0$ the second term is $o(1/N)$ uniformly in the present range of $N$.  Therefore
\begin{equation}\label{eq:M-key}
\frac{(M+1)\e^{-M}}{1+|t_0|}\gg\frac1N.
\end{equation}
As $(M+1)\e^{-M}\le1$, this proves \eqref{eq:t0-bound}.  It also follows from \eqref{eq:M-key} that
\[
(M+1)\e^{-M}\gg\frac1N,
\]
whence
\[
M\le \log N+\log\log(3N)+O(1),
\]
and \eqref{eq:M-bound} follows from $N\le(\log x)^{1/100}$.

If $|y-y_0|\le y_0/2$, then $z=\e^y$ lies in the range $\sqrt{x}\le z\le x^{3/2}$, so \eqref{eq:lipschitz-standard} applies.  Since the base
\[
\frac{1+|y-y_0|}{y_0}
\]
is at most $1/2+1/y_0<1$ for $y_0$ sufficiently large, and since $1-2/\pi>1/3$, the uniform $o(1)$ in \eqref{eq:lipschitz-standard} may be absorbed to give \eqref{eq:lipschitz-y}.  If $|y-y_0|>y_0/2$, the right hand side of \eqref{eq:lipschitz-y} is bounded below by a positive absolute constant, while each normalized partial sum has modulus at most $1$.  Enlarging the implied constant proves \eqref{eq:lipschitz-y} for all real $y$.

It remains to prove \eqref{eq:twisted-relation}.  By Cauchy--Schwarz, \eqref{eq:M-bound}, and Mertens' theorem,
\begin{align*}
\sum_{p\le x}\frac{|1-p^{\ii\tau}|}{p}
&\le
\left(\sum_{p\le x}\frac1p\right)^{1/2}
\left(\sum_{p\le x}\frac{|1-p^{\ii\tau}|^2}{p}\right)^{1/2}\\
&\le
\left(2M\sum_{p\le x}\frac1p\right)^{1/2}\\
&\le \left(\frac{\sqrt2}{10}+o(1)\right)\logtwo x
\le \frac17\logtwo x
\end{align*}
for sufficiently large $x$.  Formula \eqref{eq:mean-comparison} now gives
\[
S(x,t)=\frac{x^{\ii t_0}}{1+\ii t_0}S(x,\tau)
+O\!\left(
\frac{x\log(e+|t_0|)}{\log x}(\log x)^{1/7}
\right).
\]
Solving for $S(x,\tau)$ introduces a factor $|1+\ii t_0|\ll N$.  Since
$|t_0|\ll N\le(\log x)^{1/100}$, we also have
$\log(e+|t_0|)\ll\logtwo x$.  Therefore the resulting error is
\[
\ll x(\log x)^{1/100-6/7}\logtwo x
\ll \frac{x}{(\log x)^{3/4}},
\]
which proves \eqref{eq:twisted-relation}.
\end{proof}

\section{Zeta-function estimates and a Gaussian transform}\label{sec:zeta-tools}

We next record three lemmas for the Riemann zeta function.  Throughout, $\rho=\beta+\ii\gamma$ denotes a non-trivial zero of $\zeta(s)$, counted with multiplicity.

\begin{lemma}\label{lem:zeta-crude}
Uniformly for $0<\lambda\le1/2$ and $v\in\R$,
\begin{equation}\label{eq:zeta-crude}
|\zeta(1-\lambda+\ii v)|
\ll \frac{(2+|v|)^\lambda}{\lambda}.
\end{equation}
\end{lemma}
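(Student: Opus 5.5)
We prove \eqref{eq:zeta-crude} by writing $s=\sigma+\ii v$ with $\sigma=1-\lambda\in[1/2,1)$ and using the standard Euler--Maclaurin representation of $\zeta$ truncated at a parameter $X\ge1$:
\[
\zeta(s)=\sum_{n\le X}n^{-s}+\frac{X^{1-s}}{s-1}-\frac{\{X\}}{X^{s}}-s\int_X^\infty\frac{\{u\}}{u^{s+1}}\,du ,
\]
valid for $\Ree s>0$, $s\ne1$. We choose $X=2+|v|$ and bound each of the four terms separately.

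For the Dirichlet polynomial, $\sum_{n\le X}n^{-\sigma}\le 1+\int_1^X u^{-\sigma}\,du\le 1+\frac{X^{1-\sigma}}{1-\sigma}=1+\frac{X^\lambda}{\lambda}$, which is $\ll X^\lambda/\lambda$ since $\lambda\le1/2$. The boundary term $\{X\}X^{-s}$ is at most $1$. For the tail integral we have
\[
\left|s\int_X^\infty\frac{\{u\}}{u^{s+1}}\,du\right|\le\frac{|s|}{\sigma}X^{-\sigma}\le 2(1+|v|)X^{-1/2}\ll X^{1/2},
\]
and this is the only term needing care, since the bound $X^{1/2}$ is not $\ll X^\lambda/\lambda$ when $\lambda$ is small. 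We avoid this by performing one more Euler--Maclaurin step: replacing $\{u\}-1/2$ by its bounded-primitive $B_2$-type function and integrating by parts turns the tail into $O(|s|^2X^{-\sigma-1})+O(|s|X^{-\sigma})$, where the second term comes from the $-\tfrac12 X^{-s}$ boundary contribution. With $X=2+|v|$ both pieces are $\ll X^{1-\sigma}=X^\lambda$. Equivalently, one may simply take $X=(2+|v|)^2$ in the original formula: the tail is then $\ll |s|X^{-\sigma}\le |s|X^{-1/2}\ll1$, while the polynomial term gives $X^\lambda/\lambda=(2+|v|)^{2\lambda}/\lambda$. That exponent loses a factor of $2$, so we prefer the second-order Euler--Maclaurin step, which keeps the exponent $\lambda$.

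The pole term is the main obstacle. Here $|X^{1-s}/(s-1)|=X^\lambda/|{-\lambda}+\ii v|\le X^\lambda/\lambda$, since $|s-1|\ge|\Ree(s-1)|=\lambda$. Thus the pole is harmless precisely because the factor $1/\lambda$ is permitted on the right-hand side, and it is this term (together with the polynomial sum) that dictates the shape $(2+|v|)^\lambda/\lambda$. Collecting the four bounds gives \eqref{eq:zeta-crude} uniformly for $0<\lambda\le1/2$ and $v\in\R$. All implied constants are absolute, because every estimate used depends only on $\sigma\ge1/2$ and $|s|\ll 2+|v|$.
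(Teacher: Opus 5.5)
Your argument is correct and is essentially the paper's proof: Euler--Maclaurin truncated at $X=2+|v|$, with the Dirichlet polynomial and the pole term each bounded by $X^{\lambda}/\lambda$. The boundary term should be $+\{X\}X^{-s}$, not $-\{X\}X^{-s}$, but only its absolute value matters.

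The second-order Euler--Maclaurin step is unnecessary. Since $\sigma=1-\lambda$ exactly and $|s|<X$, the first-order tail already satisfies $\frac{|s|}{\sigma}X^{-\sigma}\le 2X\cdot X^{\lambda-1}=2X^{\lambda}$. Your $X^{1/2}$ came only from the lossy replacement $X^{-\sigma}\le X^{-1/2}$; you in fact use $|s|X^{-\sigma}\ll X^{1-\sigma}$ yourself in the refined step. This direct bound is exactly how the paper finishes.
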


\begin{proof}
Put $s=1-\lambda+\ii v$, $\sigma=\Ree s=1-\lambda$, and $X=2+|v|$.  We use the Euler--Maclaurin continuation in the form
\begin{equation}\label{eq:euler-zeta-uniform}
\zeta(s)
=\sum_{n\le X}n^{-s}+\frac{X^{1-s}}{s-1}
+O\!\left(X^{-\sigma}+\frac{|s|}{\sigma}X^{-\sigma}\right),
\qquad \sigma>0,\quad s\ne1,
\end{equation}
uniformly after the pole term has been displayed explicitly.  For completeness, this follows by writing the remainder as a Stieltjes integral involving the bounded sawtooth function and integrating once by parts.  In our range $\sigma\ge1/2$ and $|s|\ll X$.  Moreover,
\[
\sum_{n\le X}n^{-\sigma}
\le 1+\int_1^X u^{-1+\lambda}\,du
\ll \frac{X^\lambda}{\lambda},
\]
while
\[
\left|\frac{X^{1-s}}{s-1}\right|
\le \frac{X^\lambda}{|s-1|}
\le \frac{X^\lambda}{\lambda}.
\]
Finally the error in \eqref{eq:euler-zeta-uniform} is $O(X^\lambda)$, since $|s|X^{-\sigma}\ll X^\lambda$.  As $0<\lambda\le1/2$, this is also $O(X^\lambda/\lambda)$, and \eqref{eq:zeta-crude} follows.
\end{proof}

\begin{lemma}\label{lem:zero-bound}
Uniformly for $0<\lambda\le1/2$ and $u\in\R$,
\begin{equation}\label{eq:zero-bound}
|\zeta(1-\lambda+\ii u)|
\ll \frac1\lambda
\exp\!\left(
\sum_\rho\frac{2\lambda^2}{|1+\lambda+\ii u-\rho|^2}
\right).
\end{equation}
\end{lemma}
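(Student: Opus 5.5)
This is the standard Hadamard-product argument from \cite{GS18}, with the pole of $\zeta$ handled separately. Put $s_0=1+\lambda+\ii u$ and $s_1=1-\lambda+\ii u$. We compare $\log|\zeta(s_1)|$ with $\log|\zeta(s_0)|$ by integrating $\Ree\,\zeta'/\zeta$ along the horizontal segment from $s_0$ to $s_1$. Note first that $|\zeta(s_0)|\le\zeta(1+\lambda)\ll1/\lambda$. This already supplies the factor $1/\lambda$ in \eqref{eq:zero-bound}, so it suffices to show
\[
\log|\zeta(s_1)|-\log|\zeta(s_0)|\le \sum_\rho\frac{2\lambda^2}{|s_0-\rho|^2}+O(1).
\]

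The first step is to write down the partial-fraction expansion from the Hadamard product:
\[
\Ree\frac{\zeta'}{\zeta}(s)=\sum_\rho\Ree\frac1{s-\rho}-\Ree\frac1{s-1}-\frac12\Ree\frac{\Gamma'}{\Gamma}\Big(\frac s2+1\Big)+O(1).
\]
The $B$-constant is disposed of in the usual way by pairing $\rho$ with $\bar\rho$. Integrating over $\sigma\in[1-\lambda,1+\lambda]$ at height $u$ gives three contributions.

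\begin{itemize}
\item The zeros contribute $-\sum_\rho\log\bigl(|s_0-\rho|/|s_1-\rho|\bigr)$. This enters with a sign reversal, since we integrate from right to left.
\item The pole contributes $\log\bigl(|s_0-1|/|s_1-1|\bigr)=0$, because $s_0-1$ and $s_1-1$ are reflections of each other across the imaginary axis.
\item The Gamma term contributes $\lambda\log(2+|u|)+O(\lambda)$ in magnitude.
\end{itemize}

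The Gamma term is the dangerous one: it is not $O(1)$ once $\lambda\log|u|$ is large.

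The second step is the elementary zero inequality. For each $\rho=\beta+\ii\gamma$ with $\beta\le1$, write $a=1-\beta\ge0$ and $b=u-\gamma$. Then
\[
\log\frac{|s_1-\rho|^2}{|s_0-\rho|^2}=\log\Big(1-\frac{4a\lambda}{(a+\lambda)^2+b^2}\Big).
\]
This quantity is at most $0$, and it is bounded above by $-4a\lambda/|s_0-\rho|^2$. So each zero makes the difference smaller rather than larger. That is not yet what we want: the target inequality has a positive term $+2\lambda^2/|s_0-\rho|^2$ per zero, and we must use that slack to absorb the Gamma term.

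The plan for this is to average over $\Ree\,\zeta'/\zeta(s_0)$. We have
\[
-\Ree\frac{\zeta'}{\zeta}(s_0)\ll\frac1\lambda,
\]
and also
\[
\Ree\frac{\zeta'}{\zeta}(s_0)=\sum_\rho\frac{\lambda+a}{|s_0-\rho|^2}-\frac12\log(2+|u|)+O\Big(\frac1\lambda\Big).
\]
Together these give
\[
\lambda\log(2+|u|)\le\sum_\rho\frac{2\lambda(\lambda+a)}{|s_0-\rho|^2}+O(1).
\]
Adding this to the zero contribution from the first step, the term $2\lambda a/|s_0-\rho|^2$ is cancelled by the $-4a\lambda/|s_0-\rho|^2$ gained from each zero. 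What remains is $\sum_\rho 2\lambda^2/|s_0-\rho|^2+O(1)$, which is exactly the exponent in \eqref{eq:zero-bound}.

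The main obstacle is the bookkeeping of constants, so that each zero's gain exactly covers the $2a\lambda$ part, together with uniformity when $|u|$ is small. For $|u|\le 2$ with $\lambda$ small, the pole at $s=1$ sits near the segment of integration. This is harmless: the pole contribution vanishes identically by the reflection symmetry noted above, and $\zeta(s_0)$ is still $\ll1/\lambda$. Alternatively, for bounded $|u|$ the result is immediate from Lemma~\ref{lem:zeta-crude}, since there $(2+|v|)^\lambda\ll1$ and the exponential factor in \eqref{eq:zero-bound} is at least $1$.
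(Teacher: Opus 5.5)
Your proposal is correct and is essentially the paper's proof. Integrating the partial-fraction expansion of $\Ree\,\zeta'/\zeta$ across $[s_1,s_0]$ is the same as taking absolute values of the Hadamard ratio $\xi(s_1)/\xi(s_0)$, and both arguments then use the per-zero bound $|s_1-\rho|/|s_0-\rho|\le\exp(-2\lambda(1-\beta)/|s_0-\rho|^2)$ together with the explicit formula at $s_0$ and $|\zeta'/\zeta(s_0)|\le1/\lambda+O(1)$ to cancel the Gamma factor $(2+|u|)^\lambda$. One bookkeeping note: the per-zero gain in $\log|\zeta(s_1)|-\log|\zeta(s_0)|$ is $-2a\lambda/|s_0-\rho|^2$, half your bound for the squared ratio, and this cancels the $+2a\lambda/|s_0-\rho|^2$ exactly; working with the product directly, as the paper does, also avoids integrating through the pole when $u=0$ or through a possible zero on the segment.
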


\begin{proof}
For $|u|\le2$, the function $(s-1)\zeta(s)$ is bounded on a fixed neighborhood of the compact set $\{1/2\le\Ree s\le1,\ |\Imm s|\le2\}$.  Since
\[
|1-\lambda+\ii u-1|\ge\lambda,
\]
we obtain $|\zeta(1-\lambda+\ii u)|\ll1/\lambda$, and \eqref{eq:zero-bound} follows.  We may therefore assume $|u|>2$.

Set
\[
s_0=1+\lambda+\ii u,
\qquad
s_1=1-\lambda+\ii u,
\]
and recall the completed zeta function
\[
\xi(s)=\frac12s(s-1)\pi^{-s/2}\Gamma(s/2)\zeta(s).
\]
Its Hadamard factorization may be written
\[
\xi(s)=\e^{A+Bs}\prod_\rho
\left(1-\frac{s}{\rho}\right)\e^{s/\rho},
\]
where the product and sums over zeros are understood in the usual symmetric sense and
\[
\Ree B=-\sum_\rho\Ree\frac1\rho.
\]
Consequently the exponential factors cancel upon taking absolute values of the quotient at $s_1$ and $s_0$, and
\[
\left|\frac{\xi(s_1)}{\xi(s_0)}\right|
=
\prod_\rho\frac{|s_1-\rho|}{|s_0-\rho|}.
\]
Using the definition of $\xi$, the rational factors are $O(1)$ for $|u|>2$, while Stirling's formula gives
\[
\left|\frac{\Gamma(s_0/2)}{\Gamma(s_1/2)}\right|
\ll (2+|u|)^\lambda.
\]
Thus
\begin{equation}\label{eq:ratio-xi}
\left|\frac{\zeta(s_1)}{\zeta(s_0)}\right|
\ll
(2+|u|)^\lambda
\prod_\rho\frac{|s_1-\rho|}{|s_0-\rho|}.
\end{equation}
This is the zeta analogue of the first step in \cite[Lemma 3.1]{GS18}.

Since $\beta\le1$,
\begin{align*}
\frac{|s_1-\rho|}{|s_0-\rho|}
&=\left(1-\frac{4\lambda(1-\beta)}{|s_0-\rho|^2}\right)^{1/2}\\
&\le
\exp\!\left(-\frac{2\lambda(1-\beta)}{|s_0-\rho|^2}\right)\\
&=
\exp\!\left(
-2\lambda\Ree\frac1{s_0-\rho}
+\frac{2\lambda^2}{|s_0-\rho|^2}
\right).
\end{align*}
The logarithmic derivative of the same Hadamard product gives
\[
\sum_\rho\Ree\frac1{s_0-\rho}=\Ree\frac{\xi'}{\xi}(s_0).
\]
On the other hand, differentiating the defining formula for $\xi$ yields
\[
\frac{\xi'}{\xi}(s)
=
\frac1s+\frac1{s-1}-\frac12\log\pi
+\frac12\frac{\Gamma'}{\Gamma}(s/2)
+\frac{\zeta'}{\zeta}(s).
\]
Therefore Stirling's formula implies
\begin{equation}\label{eq:logder-lower}
\sum_\rho\Ree\frac1{s_0-\rho}
=\frac12\log(2+|u|)+\Ree\frac{\zeta'}{\zeta}(s_0)+O(1).
\end{equation}
By the Euler product,
\[
\left|\frac{\zeta'}{\zeta}(1+\lambda+\ii u)\right|
\le -\frac{\zeta'}{\zeta}(1+\lambda)
=\frac1\lambda+O(1).
\]
Hence
\[
\sum_\rho\Ree\frac1{s_0-\rho}
\ge\frac12\log(2+|u|)-\frac1\lambda-O(1).
\]
After insertion in \eqref{eq:ratio-xi}, the factor $(2+|u|)^\lambda$ is cancelled, up to an absolute constant, by the contribution of the term
\[
-2\lambda\sum_\rho\Ree\frac1{s_0-\rho}.
\]
We obtain
\[
\left|\frac{\zeta(s_1)}{\zeta(s_0)}\right|
\ll
\exp\!\left(
\sum_\rho\frac{2\lambda^2}{|s_0-\rho|^2}
\right).
\]
Finally $|\zeta(s_0)|\le\zeta(1+\lambda)\ll1/\lambda$, proving the lemma.
\end{proof}

The next identity is the zeta analogue of \cite[Lemma 3.2]{GS18}.  Unlike the Dirichlet $L$-function case, an additional residue occurs because $\zeta$ has a pole at $1$.

\begin{lemma}\label{lem:gaussian}
Let $\tau\in\R$, $\mathcal T>0$, and $0<\lambda\le1/2$.  Then
\begin{align}
&\sqrt{2\pi\mathcal T}
\int_{-\infty}^{\infty}
S(\e^y,\tau)
\exp\!\left((\lambda-1)y-\frac{\mathcal T y^2}{2}\right)\,dy \notag\\
&\quad=
\int_{-\infty}^{\infty}
\frac{\zeta(1-\lambda+\ii(\xi-\tau))}{1-\lambda+\ii\xi}
\exp\!\left(-\frac{\xi^2}{2\mathcal T}\right)\,d\xi \notag\\
&\qquad\quad+
\frac{2\pi}{1+\ii\tau}
\exp\!\left(\frac{(\lambda+\ii\tau)^2}{2\mathcal T}\right).
\label{eq:gaussian-transform}
\end{align}
\end{lemma}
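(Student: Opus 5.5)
The plan is a Mellin/Perron-type representation followed by a contour shift. The Gaussian weight in $y$ turns into a Gaussian in $\Imm s$, which makes every contour integral absolutely convergent and lets the contour be moved past the pole of $\zeta(s-\ii\tau)$ at $s=1+\ii\tau$.

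\emph{Step 1: the Gaussian Laplace transform.} For any complex $w$, completing the square gives
\[
\int_{-\infty}^{\infty}\exp\!\left(wy-\frac{\mathcal T y^2}{2}\right)dy=\sqrt{\frac{2\pi}{\mathcal T}}\exp\!\left(\frac{w^2}{2\mathcal T}\right).
\]
I will apply this with $w=s+\lambda-1$.

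\emph{Step 2: representation on the line $\Ree s=c$.} Fix $c>1$ and consider
\[
I:=\frac{1}{2\pi \ii}\int_{(c)}\frac{\zeta(s-\ii\tau)}{s}\exp\!\left(\frac{(s+\lambda-1)^2}{2\mathcal T}\right)ds .
\]
On $\Ree s=c$ we have $\exp\bigl((s+\lambda-1)^2/2\mathcal T\bigr)\ll e^{-(\Imm s)^2/2\mathcal T}$ and $\zeta(s-\ii\tau)$ is bounded, so $I$ converges absolutely. Expand $\zeta(s-\ii\tau)=\sum_n n^{\ii\tau}n^{-s}$ and use Step 1 to write
\[
\exp\!\left(\frac{(s+\lambda-1)^2}{2\mathcal T}\right)=\sqrt{\frac{\mathcal T}{2\pi}}\int\exp\!\left((s+\lambda-1)y-\frac{\mathcal T y^2}{2}\right)dy.
\]
Substituting $y=\log n+u$, the $n$-th term of $I$ becomes
\[
\sqrt{\frac{\mathcal T}{2\pi}}\int\exp\!\left((\lambda-1)y-\frac{\mathcal T y^2}2\right)\frac{1}{2\pi \ii}\int_{(c)}\frac{e^{su}}{s}\,ds\,dy .
\]
To avoid the conditionally convergent Perron integral, I will justify this step by first writing $1/s=\int_0^\infty e^{-sv}dv$. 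Everything is then absolutely integrable, because of the factor $e^{-cv}$ and the Gaussian in $\Imm s$ and in $y$, so Fubini applies. The inner $s$-integral is then an explicit Gaussian inverse transform, and recombining gives the indicator $\mathbf 1_{y>\log n}$. Summing over $n$ turns $\sum_n n^{\ii\tau}\mathbf 1_{n\le e^y}$ into $S(e^y,\tau)$. Since $\sqrt{2\pi\mathcal T}\cdot\sqrt{2\pi/\mathcal T}=2\pi$, the result is that the left side of \eqref{eq:gaussian-transform} equals $2\pi I$.

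\emph{Step 3: contour shift.} Move the line of integration from $\Ree s=c$ to $\Ree s=1-\lambda$, which lies in $[1/2,1)$. The only singularity crossed is the simple pole of $\zeta(s-\ii\tau)$ at $s=1+\ii\tau$, with residue $1$. The point $s=0$ is not crossed, since $1-\lambda>0$. Hence $2\pi I$ equals the new line integral plus
\[
2\pi\cdot\frac{1}{1+\ii\tau}\exp\!\left(\frac{(\lambda+\ii\tau)^2}{2\mathcal T}\right),
\]
which is exactly the stated pole term. The horizontal connecting segments at height $\pm H$ tend to $0$ as $H\to\infty$. Indeed, Lemma \ref{lem:zeta-crude}, together with the boundedness of $\zeta$ for $\Ree s\ge 1+\lambda/2$ say, gives at most polynomial growth of $\zeta(s-\ii\tau)$ in $H$ for $1-\lambda\le\Ree s\le c$ once we are away from the pole. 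Meanwhile $\bigl|\exp\bigl((s+\lambda-1)^2/2\mathcal T\bigr)\bigr|\ll_{c,\mathcal T}e^{-H^2/2\mathcal T}$ there.

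\emph{Step 4: parametrize the new line.} On the new line put $s=1-\lambda+\ii\xi$, so $ds=\ii\,d\xi$ and $(s+\lambda-1)^2=-\xi^2$. Then
\[
2\pi\cdot\frac{1}{2\pi\ii}\cdot \ii\int\frac{\zeta(1-\lambda+\ii(\xi-\tau))}{1-\lambda+\ii\xi}e^{-\xi^2/2\mathcal T}d\xi,
\]
which is the first term on the right of \eqref{eq:gaussian-transform}. This integral converges absolutely by Lemma \ref{lem:zeta-crude}.

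The only delicate point is the justification of the interchanges in Step 2, namely avoiding the conditionally convergent Perron integral. The device $1/s=\int_0^\infty e^{-sv}\,dv$ should handle it cleanly, and the rest is bookkeeping of constants.
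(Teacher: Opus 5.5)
Your argument is correct, and it reaches the identity by a somewhat different route from the paper. The paper does not fix $\lambda$ at the outset. It takes $\alpha<0$ and computes the Fourier transform of the $L^1$ function $\Phi_\alpha(y)=S(\e^y,\tau)\e^{(\alpha-1)y}$ directly as $\zeta(1-\alpha+\ii(\xi-\tau))/(1-\alpha+\ii\xi)$. Pairing this with the Gaussian gives the identity with no residue, since the corresponding line is $\Ree s=1-\alpha>1$. The paper then continues analytically in $\alpha$ up to $\alpha=\lambda$: it keeps the $\xi$-contour below the pole $\xi=\tau-\ii\alpha$ as the pole crosses the real axis, and straightens the contour back to collect the residue. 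You instead decouple the line of integration from the Gaussian parameter. With $\lambda$ fixed, you write the left side as $2\pi I$ on $\Ree s=c>1$, and the residue comes from an ordinary rectangle shift to $\Ree s=1-\lambda$. Your version replaces the analytic-continuation step (entirety of the left side, analyticity of the deformed integral in $\alpha$) with a standard contour shift and explicit decay on the horizontal segments. The paper's version obtains its starting identity from absolutely convergent integrals with no Perron-type kernel, at the price of that continuation argument.

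Two small points need tidying.

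First, apply Fubini in the variables $(s,v)$ while keeping the Gaussian in the form $\exp\bigl((s+\lambda-1)^2/2\mathcal T\bigr)$. If you first insert the $y$-integral from Step 1, the modulus of the integrand no longer decays in $\Imm s$, and the full integral over $(s,v,y)$ diverges. The correct order is to evaluate the $s$-integral first; its integrand is entire, so you may shift it to $\Ree s=1-\lambda$. This gives $\frac{\sqrt{2\pi\mathcal T}}{2\pi}\e^{(\lambda-1)u-\mathcal T u^2/2}$ with $u=\log n+v$, and only then should you set $y=u$.

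Second, Lemma \ref{lem:zeta-crude} does not cover the strip $1\le\Ree(s-\ii\tau)<1+\lambda/2$. For the horizontal segments, use for instance $\zeta(s)=\frac{s}{s-1}-s\int_1^\infty\{u\}u^{-s-1}\,du$. This gives $|\zeta(s)|\ll|s|$ uniformly for $\Ree s\ge1/2$ and $|s-1|\ge1$.
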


\begin{proof}
We give a Fourier-transform proof, which also fixes the normalization of the pole term.  For a real parameter $\alpha<0$, set
\[
\Phi_\alpha(y)=S(\e^y,\tau)\e^{(\alpha-1)y}.
\]
Since $S(\e^y,\tau)=0$ for $y<0$ and $|S(\e^y,\tau)|\le \e^y$ for $y\ge0$, the function $\Phi_\alpha$ is integrable.  With the Fourier-transform convention
\[
\widehat \Phi_\alpha(\xi)
=\int_{-\infty}^{\infty}\Phi_\alpha(y)\e^{-\ii\xi y}\,dy,
\]
absolute convergence permits us to interchange the sum and the integral.  Hence
\begin{align*}
\widehat \Phi_\alpha(\xi)
&=\sum_{n\ge1}n^{\ii\tau}
  \int_{\log n}^{\infty}
  \e^{(\alpha-1-\ii\xi)y}\,dy\\
&=\frac{1}{1-\alpha+\ii\xi}
  \sum_{n\ge1}n^{-1+\alpha+\ii(\tau-\xi)}\\
&=\frac{\zeta(1-\alpha+\ii(\xi-\tau))}
        {1-\alpha+\ii\xi}.
\end{align*}
On the other hand,
\[
\sqrt{2\pi\mathcal T}\,
\e^{-\mathcal T y^2/2}
=
\int_{-\infty}^{\infty}
\e^{-\xi^2/(2\mathcal T)}\e^{-\ii\xi y}\,d\xi.
\]
A second application of Fubini therefore yields, for $\alpha<0$,
\begin{align}
F(\alpha)
&:=\sqrt{2\pi\mathcal T}
\int_{-\infty}^{\infty}
S(\e^y,\tau)
\exp\!\left((\alpha-1)y-\frac{\mathcal T y^2}{2}\right)dy \notag\\
&=\int_{-\infty}^{\infty}
\frac{\zeta(1-\alpha+\ii(\xi-\tau))}
     {1-\alpha+\ii\xi}
\e^{-\xi^2/(2\mathcal T)}\,d\xi.
\label{eq:gaussian-alpha-negative}
\end{align}
The left-hand side is an entire function of $\alpha$, because the Gaussian dominates uniformly on compact subsets of the $\alpha$-plane.  We now continue the right-hand side from $\alpha<0$ to $0<\alpha\le1/2$.  Its only singularity that crosses the real $\xi$-axis is the pole of the zeta factor at
\[
\xi=\tau-\ii\alpha.
\]
(The pole of $(1-\alpha+\ii\xi)^{-1}$ remains in the upper half-plane since $\alpha\le1/2$.)  Keeping the $\xi$-contour below the moving zeta pole and then straightening it back to the real axis gives
\[
F(\alpha)
=
\int_{-\infty}^{\infty}
\frac{\zeta(1-\alpha+\ii(\xi-\tau))}
     {1-\alpha+\ii\xi}
\e^{-\xi^2/(2\mathcal T)}\,d\xi
+2\pi\ii\operatorname*{Res}_{\xi=\tau-\ii\alpha}H_\alpha(\xi),
\]
where
\[
H_\alpha(\xi)=
\frac{\zeta(1-\alpha+\ii(\xi-\tau))}
     {1-\alpha+\ii\xi}
\e^{-\xi^2/(2\mathcal T)}.
\]
The Gaussian gives rapid decay on every contour occurring in this finite deformation.  Since the residue of $\zeta(1-\alpha+\ii(\xi-\tau))$ as a function of $\xi$ is $1/\ii=-\ii$, we have
\begin{align*}
2\pi\ii\operatorname*{Res}_{\xi=\tau-\ii\alpha}H_\alpha(\xi)
&=\frac{2\pi}{1+\ii\tau}
  \exp\!\left(-\frac{(\tau-\ii\alpha)^2}{2\mathcal T}\right)\\
&=\frac{2\pi}{1+\ii\tau}
  \exp\!\left(\frac{(\alpha+\ii\tau)^2}{2\mathcal T}\right).
\end{align*}
Taking $\alpha=\lambda$ proves \eqref{eq:gaussian-transform}.
\end{proof}

We shall also use the following standard consequence of the logarithmic derivative of $\xi(s)$.

\begin{lemma}\label{lem:zero-sum-upper}
Uniformly for $a>0$ and $|v|\ge2$,
\begin{equation}\label{eq:zero-sum-upper}
\sum_\rho\Ree\frac1{1+a+\ii v-\rho}
\le \frac12\log(2+a+|v|)+\frac1a+O(1).
\end{equation}
\end{lemma}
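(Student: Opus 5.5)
The plan is to read off the inequality from the real part of the logarithmic derivative of the completed function $\xi$. Put $s=1+a+\ii v$ with $a>0$ and $|v|\ge2$.

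\textbf{Step 1: the identity.} As in the proof of Lemma \ref{lem:zero-bound}, the Hadamard factorization together with $\Ree B=-\sum_\rho\Ree(1/\rho)$ (symmetric summation) gives
\[
\sum_\rho\Ree\frac1{s-\rho}=\Ree\frac{\xi'}{\xi}(s).
\]
Expanding the definition of $\xi$ then yields
\[
\sum_\rho\Ree\frac1{s-\rho}
=\Ree\frac1s+\Ree\frac1{s-1}-\frac12\log\pi+\frac12\Ree\frac{\Gamma'}{\Gamma}\Bigl(\frac s2\Bigr)+\Ree\frac{\zeta'}{\zeta}(s).
\]
No truncation is needed: the left side is exactly the quantity to be bounded, and every $\rho$-term is individually positive.

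\textbf{Step 2: the elementary terms.} Since $|s|\ge|v|\ge2$ and $|s-1|\ge|v|\ge2$, both rational terms are $O(1)$; the constant $-\tfrac12\log\pi$ is also $O(1)$. For the arithmetic term, the Euler product gives
\[
\Bigl|\frac{\zeta'}{\zeta}(1+a+\ii v)\Bigr|\le-\frac{\zeta'}{\zeta}(1+a).
\]
The classical bound $-\frac{\zeta'}{\zeta}(\sigma)<\frac1{\sigma-1}$ for $\sigma>1$ then gives at most $1/a$ uniformly in $a>0$, with no restriction to small $a$. Alternatively, for $a\le1$ one has $-\frac{\zeta'}{\zeta}(1+a)=1/a+O(1)$, and for $a\ge1$ the left side is bounded, which is again $\le 1/a+O(1)$ up to the $O(1)$.

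\textbf{Step 3: the Gamma term, which is the only point needing care.} I need $\Ree\frac{\Gamma'}{\Gamma}(w)\le\log|w|+O(1)$ uniformly for $w=s/2$ with $\Ree w>1/2$ and $|\Imm w|\ge1$. This follows from Stirling's formula, $\frac{\Gamma'}{\Gamma}(w)=\log w+O(1/|w|)$, which holds uniformly in the sector $|\arg w|\le\pi/2$ away from $0$; here $|w|\ge1$. Hence
\[
\frac12\Ree\frac{\Gamma'}{\Gamma}\Bigl(\frac s2\Bigr)\le\frac12\log\frac{|s|}{2}+O(1)\le\frac12\log(2+a+|v|)+O(1),
\]
using $|s|\le 1+a+|v|$. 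The point of the argument is uniformity in large $a$: the gamma contribution then grows like $\tfrac12\log a$, which is why $a$ appears inside the logarithm in \eqref{eq:zero-sum-upper}.

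Adding the bounds from Steps 2 and 3 into the identity of Step 1 gives \eqref{eq:zero-sum-upper}.
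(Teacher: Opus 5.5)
Your proposal is correct and follows essentially the same route as the paper. The paper's proof likewise takes real parts in the $\xi'/\xi$ identity and absorbs the rational terms into $O(1)$. It bounds $\Ree\frac{\zeta'}{\zeta}(1+a+\ii v)$ by $-\frac{\zeta'}{\zeta}(1+a)\le\frac1a+O(1)$ via the Euler product, and controls the Gamma term by Stirling together with $|s|\ll 2+a+|v|$. Your explicit check of uniformity in large $a$, and your use of the sharper classical bound $-\frac{\zeta'}{\zeta}(\sigma)<\frac1{\sigma-1}$, are only minor refinements of the same argument.
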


\begin{proof}
From the logarithmic derivative of the completed zeta function,
\[
\sum_\rho\Ree\frac1{s-\rho}
=
\Ree\frac{\zeta'}{\zeta}(s)
+\frac12\Ree\frac{\Gamma'}{\Gamma}(s/2)
+O(1)
\]
for $\Ree s>1$ and $|\Imm s|\ge2$; the rational factors $1/s$ and $1/(s-1)$ are absorbed in $O(1)$.  Stirling's formula, uniformly in the right half-plane, gives
\[
\Ree\frac{\Gamma'}{\Gamma}(s/2)
\le \log(2+|s|)+O(1).
\]
At $s=1+a+\ii v$, the Euler product implies
\[
\Ree\frac{\zeta'}{\zeta}(s)
\le\left|\frac{\zeta'}{\zeta}(s)\right|
\le-\frac{\zeta'}{\zeta}(1+a)
\le\frac1a+O(1).
\]
Since $|s|\ll 2+a+|v|$, this proves \eqref{eq:zero-sum-upper}.
\end{proof}

\section{Large zeta sums force many zeros}\label{sec:main-proof}

We first establish the analogue of \cite[Proposition 3.4]{GS18}.

\begin{proposition}\label{prop:forcing}
There are absolute constants $c_0>0$ and $T_1\ge3$ such that the following holds.  Let $T\ge T_1$,
\[
T\le |t|\le2T,
\qquad
\exp(\sqrt{\log T})\le x\le\sqrt T.
\]
Put $y_0=\log x$ and suppose that
\[
|S(x,t)|=\frac{x}{N},
\qquad
1\le N\le y_0^{1/100}.
\]
Let $t_0$ be defined by \eqref{eq:t0-definition}.  If
\begin{equation}\label{eq:lambda-range}
\frac{c_0N^6}{y_0}\le\lambda\le\frac12,
\end{equation}
then there is a real $\eta$ with
\begin{equation}\label{eq:eta-range}
|\eta|\le2\lambda\sqrt{\frac{\log T}{y_0}}
\end{equation}
such that
\begin{equation}\label{eq:forcing}
\sum_\rho
\frac{\lambda}{|1+\lambda+\ii(t-t_0+\eta)-\rho|^2}
\ge\frac{y_0}{4}.
\end{equation}
\end{proposition}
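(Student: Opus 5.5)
We follow the argument of \cite[Proposition~3.4]{GS18}: we evaluate the Gaussian transform of Lemma~\ref{lem:gaussian} in two ways, with $\tau=t-t_0$ and $\mathcal T=\log T/y_0$ (possibly times a constant). The idea is to bound the left side from below using the Lipschitz estimate, and the right side from above using Lemma~\ref{lem:zero-bound}.

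\textbf{Lower bound for the left side.} In the $y$-integral of \eqref{eq:gaussian-transform} with $\lambda$ in the range \eqref{eq:lambda-range}, the weight is $\e^{\lambda y-\mathcal T y^2/2}$, where $\e^{(\lambda-1)y}S(\e^y,\tau)=\e^{\lambda y}\cdot S(\e^y,\tau)\e^{-y}$. Completing the square, this weight is a Gaussian centered at $y_1=\lambda/\mathcal T$ of width $\mathcal T^{-1/2}$, with total mass $\sqrt{2\pi/\mathcal T}\,\e^{\lambda^2/(2\mathcal T)}$. The choice of $\mathcal T$ should put the center near $y_0$; the cleanest option is to choose $\mathcal T=\lambda/y_0$, so the Gaussian sits exactly at $y_0$. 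However, the Lipschitz error must then be controlled at width $\sqrt{y_0/\lambda}$.

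By \eqref{eq:lipschitz-y}, on the bulk of this Gaussian we may replace $S(\e^y,\tau)\e^{-y}$ by $S(x,\tau)/x$. The error is $\ll((1+\sqrt{y_0/\lambda})/y_0)^{1/3}\approx(\lambda y_0)^{-1/6}$. By \eqref{eq:twisted-relation} and $|S(x,t)|=x/N$, we have $|S(x,\tau)|/x\gg 1/N$. Using $|1+\ii t_0|\ge1$ and the error $(\log x)^{-3/4}$, we still get $\gg1/N$. The Lipschitz error is $\le (c_0N^6)^{-1/6}\ll 1/(c_0^{1/6}N)$, which is smaller than $1/N$ once $c_0$ is large. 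This is exactly why the hypothesis reads $\lambda\ge c_0N^6/y_0$. So the left side has modulus $\gg N^{-1}\e^{\lambda^2/(2\mathcal T)}=N^{-1}\e^{\lambda y_0/2}$.

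\textbf{The pole term.} This term is $\ll |t|^{-1}\exp(\Ree(\lambda+\ii\tau)^2/(2\mathcal T))\le T^{-1}\e^{\lambda y_0/2}$, since $\tau^2>0$ only decreases it. It is therefore negligible against the main term.

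\textbf{Upper bound for the right side.} For the $\xi$-integral, we split at $|\xi|\le \xi_0:=2\lambda\sqrt{\log T/y_0}$ (this is the $\eta$-range \eqref{eq:eta-range}; with a matching choice of $\mathcal T$ the tail bookkeeping works).

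For $|\xi|>\xi_0$, we use Lemma~\ref{lem:zeta-crude}: $|\zeta|\ll T^\lambda/\lambda$ because $|\xi-\tau|\ll T$. The Gaussian tail $\e^{-\xi_0^2/(2\mathcal T)}$ must beat $T^\lambda$. This forces the relation between $\mathcal T$ and $\xi_0$, so I would take $\mathcal T$ of the form $\lambda/y_0$ and check $\xi_0^2 y_0/(2\lambda)=2\lambda\log T$. This kills $T^\lambda$ with room to spare, and the constant in \eqref{eq:eta-range} has been chosen for this.

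For $|\xi|\le\xi_0$, suppose \eqref{eq:forcing} fails for every such $\eta=\xi$. Then Lemma~\ref{lem:zero-bound}, applied at $u=\xi-\tau$, gives $|\zeta|\ll\lambda^{-1}\e^{2\lambda\cdot y_0/4}=\lambda^{-1}\e^{\lambda y_0/2}$. (Here we use the symmetry $\rho\mapsto\bar\rho$ to match $\ii(t-t_0+\eta)$ with $\ii(\xi-\tau)$ up to sign; I will replace $\eta$ by $-\xi$ as needed.) The $\xi$-integral of this bound is $\ll\lambda^{-1}\e^{\lambda y_0/2}\sqrt{\mathcal T}$. We also gain the factor $1/|1-\lambda+\ii\xi|$, which is bounded.

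\textbf{Main obstacle.} The difficulty is the competition of exponents: the left side is $\gg N^{-1}\e^{\lambda y_0/2}\sqrt{2\pi\mathcal T}\cdot\sqrt{2\pi/\mathcal T}$, while the right side is $\ll\lambda^{-1}\sqrt{\mathcal T}\,\e^{\lambda y_0/2}$. A naive comparison loses, so the constants must be tuned. I would assume failure with threshold $y_0/4$ replaced by something giving exponent $\lambda y_0/2-\lambda y_0\epsilon$, i.e.\ take threshold exactly so that $2\lambda\cdot(\text{threshold})<\lambda y_0/2-\log(N/\lambda\cdots)$. Alternatively, I would shift the Gaussian center to $y_1=2y_0$ or so, choosing $\mathcal T=\lambda/y_1$. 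Then the left side grows like $\e^{\lambda y_1/2}$, versus $\e^{\lambda y_0/2}$ on the right, gaining $\e^{\lambda(y_1-y_0)/2}\ge\e^{c_0N^6/2}$, which beats $N/\lambda$. The point is to keep the Lipschitz estimate valid near $y_1$: \eqref{eq:lipschitz-y} holds for all $y$ but is only nontrivial for $|y_1-y_0|\ll y_0$. So the real work is choosing $y_1=y_0+\kappa y_0$ with small $\kappa$ such that both the Lipschitz loss $\kappa^{1/3}$ is $\ll 1/N$ and $\lambda\kappa y_0\gg\log(N/\lambda)$. This is possible precisely because $\lambda y_0\ge c_0N^6$, and the choice $\kappa\asymp N^{-3}$-ish is where $N^6$ enters. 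I would carry this out first, then adjust $\xi_0$ and $\mathcal T$ to match \eqref{eq:eta-range}. The resulting contradiction proves \eqref{eq:forcing} for some admissible $\eta$.
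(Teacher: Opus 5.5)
Your skeleton is the paper's. It has $\tau=t-t_0$ and $\mathcal T=\lambda/y_0$. It has the Lipschitz lower bound $\gg N^{-1}\e^{\lambda y_0/2}$, with $N^6$ entering via $(\lambda y_0)^{-1/6}\le c_0^{-1/6}/N$, and the negligible pole term. It cuts at $\xi_0=2\lambda\sqrt{\log T/y_0}$, where $\e^{-\xi_0^2/(2\mathcal T)}=T^{-2\lambda}$, and uses Lemma~\ref{lem:zero-bound} with $\rho\mapsto\overline\rho$ on $|\xi|\le\xi_0$. Arguing by contradiction with the integral, rather than at the paper's maximizing point, is an equivalent formulation. (For $|\xi|>T/2$ you need $(2+|\xi|+T)^\lambda/\lambda$ rather than $T^\lambda/\lambda$, but the Gaussian absorbs this.)

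The gap is in your last paragraph: you assert that the direct comparison loses, and that is false. With $\mathcal T=\lambda/y_0$, your own central bound is
\[
\lambda^{-1}\sqrt{\mathcal T}\,\e^{\lambda y_0/2}=(\lambda y_0)^{-1/2}\e^{\lambda y_0/2}\le c_0^{-1/2}N^{-3}\e^{\lambda y_0/2},
\]
and the tail is $\ll T^{-\lambda}(\lambda y_0)^{-1/2}$. The left side is $\ge \pi N^{-1}\e^{\lambda y_0/2}$, so since $N\ge1$ this is already a contradiction once $c_0$ is large. The Gaussian width $\sqrt{\mathcal T}$ cancels the $1/\lambda$ in Lemma~\ref{lem:zero-bound}, leaving a surplus $\sqrt{\lambda y_0}/N\ge c_0^{1/2}N^2$. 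This is exactly what the paper uses to obtain $|\zeta(1-\lambda+\ii(\xi-\tau))|\gg\lambda^{-1}(\lambda y_0)^{1/3}\e^{\lambda y_0/2}$ at a maximizing $\xi$.

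Because you reject this step and rely on workarounds instead, the proposal as written does not prove the statement. Lowering the threshold below $y_0/4$ proves a weaker result. The shifted-center variant is justified by requiring $\lambda\kappa y_0\gg\log(N/\lambda)$ with $\kappa\ll N^{-3}$; here you have dropped the $\sqrt{\mathcal T}$. That requirement cannot hold across the admissible range. For $N=1$ and $\lambda=c_0/y_0$, the gain $\lambda\kappa y_0\le c_0\kappa$ is bounded, while $\log(N/\lambda)=\log(y_0/c_0)\to\infty$. Your claim that $N^6$ enters through $\kappa\asymp N^{-3}$ is also a misattribution: it enters only through the Lipschitz error, which you had already handled correctly. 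With correct bookkeeping no shift is needed; just finish the comparison you set up with $\mathcal T=\lambda/y_0$.
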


\begin{proof}
Set
\[
\tau=t-t_0,
\qquad
\mathcal T=\frac{\lambda}{y_0}.
\]
By Lemma \ref{lem:t0},
\[
\frac{S(\e^y,\tau)}{\e^y}
=
\frac{S(\e^{y_0},\tau)}{\e^{y_0}}
+O\!\left(\left(\frac{1+|y-y_0|}{y_0}\right)^{1/3}\right).
\]
Since
\[
\lambda y-\frac{\mathcal T y^2}{2}
=
\frac{\lambda y_0}{2}-\frac{\mathcal T}{2}(y-y_0)^2,
\]
a change of variables in the Gaussian error integral gives
\begin{align}
&\sqrt{2\pi\mathcal T}
\int_{-\infty}^{\infty}
\frac{S(\e^y,\tau)}{\e^y}
\exp\!\left(\lambda y-\frac{\mathcal T y^2}{2}\right)dy \notag\\
&\qquad=
2\pi\e^{\lambda y_0/2}
\left(
\frac{S(\e^{y_0},\tau)}{\e^{y_0}}
+O((\lambda y_0)^{-1/6})
\right).
\label{eq:gaussian-left-eval}
\end{align}
By \eqref{eq:twisted-relation},
\[
\frac{S(\e^{y_0},\tau)}{\e^{y_0}}
=(1+\ii t_0)\e^{-\ii t_0y_0}\frac{S(x,t)}x
+O(y_0^{-3/4}).
\]
The main term has modulus $\sqrt{1+t_0^2}/N\ge1/N$.  By choosing the absolute constant $c_0$ in the proposition sufficiently large, $(\lambda y_0)^{-1/6}\le c_0^{-1/6}/N$, while
\[
y_0^{-3/4}\le \frac{y_0^{-3/4+1/100}}{N}=\frac{y_0^{-37/50}}{N}=o(1/N).
\]  Hence \eqref{eq:gaussian-left-eval} yields
\begin{equation}\label{eq:gaussian-lower}
\left|
\sqrt{2\pi\mathcal T}
\int_{-\infty}^{\infty}
\frac{S(\e^y,\tau)}{\e^y}
\exp\!\left(\lambda y-\frac{\mathcal T y^2}{2}\right)dy
\right|
\ge\frac{\pi}{N}\e^{\lambda y_0/2}.
\end{equation}

Apply Lemma \ref{lem:gaussian}.  Since $|t_0|\ll N$ by Lemma \ref{lem:t0} and $T\le|t|\le2T$, we have $|\tau|\asymp T$.  The residue in \eqref{eq:gaussian-transform} has modulus
\begin{equation}\label{eq:residue-small}
\ll \frac1T
\exp\!\left(
\frac{\lambda y_0}{2}-\frac{\tau^2y_0}{2\lambda}
\right),
\end{equation}
and is therefore negligible compared with \eqref{eq:gaussian-lower}.  Consequently
\begin{equation}\label{eq:zeta-integral-lower}
\frac1N\e^{\lambda y_0/2}
\ll
\int_{-\infty}^{\infty}
\frac{|\zeta(1-\lambda+\ii(\xi-\tau))|}{|1-\lambda+\ii\xi|}
\exp\!\left(-\frac{\xi^2}{2\mathcal T}\right)d\xi.
\end{equation}
It follows that
\begin{equation}\label{eq:weighted-maximum}
\max_{\xi\in\R}
\frac{|\zeta(1-\lambda+\ii(\xi-\tau))|}{|1-\lambda+\ii\xi|}
\exp\!\left(-\frac{\xi^2}{4\mathcal T}\right)
\gg
\sqrt{\frac{y_0}{\lambda}}\,
\frac{\e^{\lambda y_0/2}}N.
\end{equation}

Put
\[
B=2\lambda\sqrt{\frac{\log T}{y_0}}.
\]
Suppose first that $B<|\xi|\le T/2$.  Since $|\tau|\asymp T$, we have $|\xi-\tau|\asymp T$.  Lemma \ref{lem:zeta-crude} and the definition of $B$ give
\begin{align*}
&\frac{|\zeta(1-\lambda+\ii(\xi-\tau))|}{|1-\lambda+\ii\xi|}
\exp\!\left(-\frac{\xi^2}{4\mathcal T}\right)\\
&\qquad\ll
\frac{T^\lambda}{\lambda}
\exp(-\lambda\log T)
\ll\frac1\lambda.
\end{align*}
For $|\xi|>T/2$, Lemma \ref{lem:zeta-crude} gives
\[
|\zeta(1-\lambda+\ii(\xi-\tau))|
\ll \frac{(2+|\xi|+T)^\lambda}{\lambda}.
\]
Since $|1-\lambda+\ii\xi|\ge1/2$, it remains to control
\[
(2+|\xi|+T)^\lambda
\exp\!\left(-\frac{\xi^2y_0}{4\lambda}\right).
\]
For $|\xi|\ge T/2$ its logarithm is at most
\[
\lambda\log(2+|\xi|+T)-\frac{\xi^2y_0}{4\lambda},
\]
which is negative with magnitude $\gg T^2y_0/\lambda$ at $|\xi|=T/2$ and decreases thereafter once the absolute threshold $T_1$ is enlarged if necessary.  Thus this range is $O(1/\lambda)$, in fact exponentially smaller.

On the other hand, the right hand side of \eqref{eq:weighted-maximum} is
\[
\frac1\lambda
\frac{\sqrt{\lambda y_0}}{N}\e^{\lambda y_0/2},
\]
which is larger than any prescribed fixed multiple of $1/\lambda$ once the absolute constant $c_0$ is chosen sufficiently large.  Thus the maximum in \eqref{eq:weighted-maximum} is attained at a point $\xi$ with $|\xi|\le B$.  At this point, using $\lambda y_0\ge c_0N^6$ once more,
\begin{equation}\label{eq:large-zeta-point}
|\zeta(1-\lambda+\ii(\xi-\tau))|
\gg
\frac1\lambda(\lambda y_0)^{1/3}\e^{\lambda y_0/2}.
\end{equation}
Let
\[
\mathcal Z:=\sum_\rho
\frac{\lambda}{|1+\lambda+\ii(\xi-\tau)-\rho|^2}.
\]
Lemma \ref{lem:zero-bound} and \eqref{eq:large-zeta-point} imply
\[
(\lambda y_0)^{1/3}\e^{\lambda y_0/2}
\ll \exp(2\lambda\mathcal Z).
\]
Taking logarithms,
\[
2\lambda\mathcal Z
\ge \frac{\lambda y_0}{2}
+\frac13\log(\lambda y_0)-O(1).
\]
Since $\lambda y_0\ge c_0N^6\ge c_0$, choosing the absolute constant $c_0$ sufficiently large makes the last two terms non-negative.  Hence
\[
\mathcal Z\ge\frac{y_0}{4},
\]
which is the required weighted zero lower bound.  Finally, the non-trivial zeros of $\zeta$ are invariant under complex conjugation.  Replacing $\rho$ by $\overline\rho$ and setting $\eta=-\xi$ gives \eqref{eq:eta-range} and \eqref{eq:forcing}.
\end{proof}

\begin{proof}[Proof of Theorem \ref{thm:main}]
Let
\[
y_0=\log x,
\qquad
\phi=t-t_0,
\qquad
\lambda=\frac{L}{40y_0}.
\]
Since $L\le y_0/2$, we have $\lambda\le1/80$.  Fix the absolute constant $c$ in the statement of the theorem so that $c\ge40c_0$ and also so that it dominates the absolute implied constant in \eqref{eq:t0-bound}.  Then $L\ge cN^6$ implies
\[
\lambda=\frac{L}{40y_0}\ge\frac{c_0N^6}{y_0},
\]
so Proposition \ref{prop:forcing} applies (after enlarging the absolute threshold $T_0$ to dominate $T_1$ and the finitely many absolute thresholds used in Section \ref{sec:mean-values}).  Thus for some $\eta$ satisfying \eqref{eq:eta-range},
\begin{equation}\label{eq:zero-weight-main}
\sum_\rho
\frac{\lambda}{|1+\lambda+\ii(\phi+\eta)-\rho|^2}
\ge\frac{y_0}{4}.
\end{equation}
Also, Lemma \ref{lem:t0} gives
\[
|\phi-t|=|t_0|\ll N.
\]

Write $Q=\log T$ and set
\[
a=20\lambda\frac{Q}{y_0}.
\]
Consider a zero $\rho$ satisfying
\begin{equation}\label{eq:outer-zero}
|1+\ii\phi-\rho|\ge40\lambda\frac{Q}{y_0}=2a.
\end{equation}
Since $\Ree\rho\le1$,
\[
|1+a+\ii\phi-\rho|\ge |1+\ii\phi-\rho|\ge2a.
\]
Moreover, $y_0\le Q/2$, and hence
\[
\frac{|\eta|}{a}
\le\frac1{10}\sqrt{\frac{y_0}{Q}}
\le\frac1{10\sqrt2}.
\]
By the triangle inequality and the preceding lower bound for $|1+a+\ii\phi-\rho|$,
\begin{align*}
|1+\lambda+\ii(\phi+\eta)-\rho|
&\ge |1+a+\ii\phi-\rho|-a-|\eta|\\
&\ge\frac9{20}|1+a+\ii\phi-\rho|.
\end{align*}
Therefore the total contribution to \eqref{eq:zero-weight-main} from zeros satisfying \eqref{eq:outer-zero} is at most
\begin{align}
5\lambda\sum_\rho\frac1{|1+a+\ii\phi-\rho|^2}
&\le \frac{5\lambda}{a}
\sum_\rho\Ree\frac1{1+a+\ii\phi-\rho} \notag\\
&=\frac{y_0}{4Q}
\sum_\rho\Ree\frac1{1+a+\ii\phi-\rho}.
\label{eq:outer-contribution}
\end{align}
Here we used $\Ree\rho\le1$ in the first inequality.

Since $|\phi-t|\ll N$ and $T\le|t|\le2T$, we have $|\phi|\asymp T$.  Moreover, $\lambda\le1/80$ and $y_0\ge\sqrt Q$, so
\[
a=20\lambda\frac{Q}{y_0}\le\frac{Q}{4y_0}\le\frac14\sqrt Q=o(T).
\]
Thus
\[
\log(2+a+|\phi|)=Q+O(1).
\]
Lemma \ref{lem:zero-sum-upper} therefore gives
\begin{equation}\label{eq:logsumbound}
\sum_\rho\Ree\frac1{1+a+\ii\phi-\rho}
\le\frac12Q+\frac1a+O(1).
\end{equation}
Now
\[
\frac1a=\frac{y_0}{20\lambda Q}.
\]
Because $\lambda\ge c_0N^6/y_0$ and $y_0\le Q/2$,
\[
\frac1a\le\frac{Q}{80c_0N^6}.
\]
Since $c_0$ was chosen sufficiently large, in particular $c_0\ge1$, the term $1/a$ is at most $Q/80$.  After enlarging the absolute threshold $T_0$ to absorb the $O(1)$ term, \eqref{eq:logsumbound} is at most $5Q/9$.  Consequently \eqref{eq:outer-contribution} is at most
\[
\frac{5y_0}{36}.
\]
By \eqref{eq:zero-weight-main}, the zeros satisfying
\[
|1+\ii\phi-\rho|<40\lambda\frac{Q}{y_0}
\]
therefore contribute at least
\[
\frac{y_0}{4}-\frac{5y_0}{36}=\frac{y_0}{9}.
\]
Each individual zero contributes at most $1/\lambda$, since
\[
|1+\lambda+\ii(\phi+\eta)-\rho|\ge1+\lambda-\Ree\rho\ge\lambda.
\]
Hence the number of zeros in this disk is at least
\[
\frac{\lambda y_0}{9}=\frac{L}{360}.
\]
Finally,
\[
40\lambda\frac{Q}{y_0}
=L\frac{\log T}{(\log x)^2},
\]
which is exactly the radius in \eqref{eq:main-disk}.  This proves the theorem.
\end{proof}

\section{A local zero-density consequence}\label{sec:cor-proof}

For the part of Theorem \ref{thm:corollary} with $x\ge\sqrt T$, a classical exponential-sum estimate is sufficient.  We record a convenient form.  The exponent pair $(1/6,2/3)$, obtained by the van der Corput method, gives the required power saving; see, for example, \cite[Chapter~8, especially Sections~8.3--8.4]{IK}.

\begin{lemma}\label{lem:large-x}
There is an absolute $T_2\ge3$ such that, uniformly for $T\ge T_2$, $T\le|t|\le2T$, and
\[
x\ge\sqrt T,
\]
one has
\begin{equation}\label{eq:large-x-bound}
S(x,t)\ll xT^{-1/13},
\end{equation}
with an absolute implied constant.
\end{lemma}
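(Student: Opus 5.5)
The plan is to prove Lemma \ref{lem:large-x} by splitting $S(x,t)$ dyadically and treating each dyadic block as an exponential sum $\sum_{N<n\le 2N}e(f(n))$ with phase $f(u)=\frac{t}{2\pi}\log u$. Since $T\le|t|\le2T$, the derivatives satisfy $|f^{(k)}(u)|\asymp_k TN^{-k}$ on $[N,2N]$. This puts us in the setting of exponent pairs with the parameter $F=T$ (up to the $2\pi$ normalization). The exponent pair $(1/6,2/3)$ then gives, for each block,
\[
\sum_{N<n\le 2N}n^{\ii t}\ll (T/N)^{1/6}N^{2/3}+N/T=T^{1/6}N^{1/2}+N/T,
\]
uniformly for $N\ge1$, with an absolute constant. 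A partial block $\sum_{N<n\le N'}$ with $N'\le2N$ obeys the same bound, since the exponent pair estimate is uniform in subintervals.

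Next I would sum over the dyadic blocks. Blocks with $N\le T^{1/2-\kappa}$ can simply be bounded trivially, contributing at most $T^{1/2-\kappa}\ll xT^{-\kappa}$ because $x\ge\sqrt T$. For the remaining blocks, $N\le x$, the sum of $T^{1/6}N^{1/2}$ over dyadic $N\le x$ is $\ll T^{1/6}x^{1/2}$. Using $x\ge T^{1/2}$ we have
\[
T^{1/6}x^{1/2}=x\,T^{1/6}x^{-1/2}\le xT^{1/6-1/4}=xT^{-1/12}\le xT^{-1/13}.
\]
The terms $N/T$ sum to $O(x/T)$, which is also admissible. So in fact no trivial range is needed: the whole sum over dyadic blocks up to $x$ is $\ll T^{1/6}x^{1/2}+x/T\ll xT^{-1/12}$, and the extra slack from $1/12$ to $1/13$ can absorb a possible logarithmic factor from the number of dyadic blocks, should one use a version of the bound with losses.

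The main point needing care is uniformity when $N$ is very large compared with $T$, for instance $x\ge T^{A}$ with no upper restriction in the lemma. There $F/N=T/N$ is small, and the exponent-pair inequality in the form $(F/N)^{k}N^{l}+N/F$ is exactly the version valid for all $N$ (cf.\ \cite[Chapter~8]{IK}); the $N/F$ term accounts for the regime where Kusmin--Landau applies. As a fallback, for $N\ge T$ I would apply Kusmin--Landau directly: there $|f'(u)|=|t|/(2\pi u)\le 1/\pi<1/2$ and $f'$ is monotone, giving a block bound $\ll N/T$, hence a total contribution $\ll x/T$. Combining the two ranges gives \eqref{eq:large-x-bound} with an absolute constant once $T\ge T_2$.
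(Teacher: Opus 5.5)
Your proposal is correct and is essentially the paper's argument: both use the exponent pair $(1/6,2/3)$ on dyadic blocks up to height $\asymp T$, which gives $T^{1/6}x^{1/2}\ll xT^{-1/12}$, and the Kusmin--Landau lemma on blocks beyond $T$, which gives a total $O(x/T)$. The only cosmetic difference is that the paper splits at $x=t$ and bounds the initial segment by $S(t,t)\ll t^{2/3}$, whereas you split at $N\asymp T$ (or invoke the combined form $(F/N)^kN^l+N/F$).
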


\begin{proof}
By complex conjugation it is enough to consider $t>0$.  Write
\[
e(z)=e^{2\pi\ii z},
\qquad
f(u)=\frac{t}{2\pi}\log u,
\qquad n^{\ii t}=e(f(n)).
\]
For a dyadic interval $X<n\le2X$ with $1\le X\le t$, the exponent pair
$(1/6,2/3)$ gives
\begin{equation}\label{eq:dyadic-exponent-pair}
\sum_{X<n\le2X}n^{\ii t}
\ll t^{1/6}X^{1/2}.
\end{equation}
Indeed, on this interval
$|f^{(j)}(u)|\asymp_j tX^{-j}$, so the standard exponent-pair estimate
applies with parameter $t/X\ge1$.  Summing over dyadic intervals is a
geometric sum.  Therefore, for $\sqrt T\le x\le t$,
\[
S(x,t)\ll t^{1/6}x^{1/2}
\ll xT^{-1/12}
\ll xT^{-1/13}.
\]

It remains to treat $x>t$.  The preceding estimate at $x=t$ gives
\[
S(t,t)\ll t^{2/3}.
\]
Now let $X\ge t$.  On $[X,2X]$ the derivative
\[
f'(u)=\frac{t}{2\pi u}
\]
is monotone and satisfies
\[
0<f'(u)\le\frac1{2\pi}<\frac12,
\qquad
\|f'(u)\|=f'(u)\ge \frac{t}{4\pi X},
\]
where $\|\cdot\|$ denotes distance to the nearest integer.  The
Kusmin--Landau first-derivative lemma therefore yields
\begin{equation}\label{eq:kusmin-large-x}
\sum_{X<n\le2X}n^{\ii t}\ll \frac{X}{t}.
\end{equation}
Summing \eqref{eq:kusmin-large-x} over the dyadic intervals between $t$
and $x$ gives $O(x/t)$.  Hence, using $t\asymp T$ and $x\ge t$,
\[
S(x,t)
\ll t^{2/3}+\frac{x}{t}
\ll xT^{-1/3}+xT^{-1}
\ll xT^{-1/13}.
\]
All implied constants here are absolute, and no upper restriction on $x$
is used.
\end{proof}

\begin{proof}[Proof of Theorem \ref{thm:corollary}]
Suppose first that
\[
\sqrt T\le x\le T^A.
\]
By Lemma \ref{lem:large-x},
\[
|S(x,t)|\ll xT^{-1/13}.
\]
Since $\log x\le A\log T$, we have
\[
T^{-1/13}\ll_A (\log x)^{-1/100}
\]
for $T\ge T_0(A,\delta)$ after increasing $T_0(A,\delta)$ if necessary.  Hence
\[
|S(x,t)|\ll_A \frac{x}{(\log x)^{1/100}}.
\]

It remains to consider
\begin{equation}\label{eq:small-x-range}
T^\varepsilon\le x\le\sqrt T.
\end{equation}
If this interval is non-empty, then $\varepsilon\le1/2$.  Moreover,
\[
\log x\ge\varepsilon\log T>(\log T)^{2/3},
\]
so $x\ge\exp(\sqrt{\log T})$ once the threshold $T_0(A,\delta)$ is enlarged if necessary.

Assume, for a contradiction, that
\[
|S(x,t)|\ge\frac{x}{(\log x)^{1/100}}.
\]
Write
\[
|S(x,t)|=\frac{x}{N}.
\]
Then
\[
1\le N\le(\log x)^{1/100},
\]
so Theorem \ref{thm:main} is applicable once an admissible $L$ is chosen.  Take
\begin{equation}\label{eq:L-choice}
L=\delta\varepsilon^2\log T.
\end{equation}
Since $\delta>0$ is fixed and $\varepsilon>(\log T)^{-1/3}$,
\[
L>\delta(\log T)^{1/3}.
\]
On the other hand,
\[
N^6\le(\log x)^{6/100}\le(\log T)^{6/100}.
\]
As $1/3>6/100$, we have $L\ge cN^6$ for all $T$ beyond a threshold depending at most on $\delta$ (which may be absorbed into $T_0(A,\delta)$), where $c$ is the constant in Theorem \ref{thm:main}.  Also, from \eqref{eq:small-x-range},
\[
\log x\ge\varepsilon\log T,
\]
and therefore
\[
L=\delta\varepsilon^2\log T
\le\delta\varepsilon\log x
\le\frac18\log x<\frac12\log x,
\]
Thus \eqref{eq:L-choice} lies in the admissible range of Theorem \ref{thm:main}.

That theorem produces a real number $\phi$ with
\[
|\phi-t|\le cN
\le c(\log x)^{1/100}
\le C(\log T)^{1/100},
\]
where the absolute constant $C$ in the statement is chosen once and for all to dominate $c$, such that the disk
\[
\left\{s:\ |s-(1+\ii\phi)|<L\frac{\log T}{(\log x)^2}\right\}
\]
contains at least $L/360$ zeros of $\zeta$.  Its radius is
\[
L\frac{\log T}{(\log x)^2}
=
\frac{\delta\varepsilon^2(\log T)^2}{(\log x)^2}
\le\delta.
\]
Hence the disk is contained in
\[
\left\{s:\ \Ree s\ge1-\delta,\quad |\Imm s-\phi|\le\delta\right\}.
\]
By the hypothesis of the theorem, the latter region contains at most
\[
\frac{\delta\varepsilon^2\log T}{400}
\]
zeros.  But Theorem \ref{thm:main} gives at least
\[
\frac{L}{360}
=
\frac{\delta\varepsilon^2\log T}{360}
>
\frac{\delta\varepsilon^2\log T}{400},
\]
a contradiction.  This proves \eqref{eq:conditional-bound}.
\end{proof}

\begin{remark}
The restriction $x\le T^A$ in Theorem \ref{thm:corollary} is needed only when the power saving of Lemma \ref{lem:large-x} is converted into a negative power of $\log x$; Lemma \ref{lem:large-x} itself has no upper restriction on $x$.  Some upper control on $x$ is nevertheless essential for the logarithmic conclusion.  Formula \eqref{eq:euler-large-x} shows that no estimate $S(x,t)=o(x)$ can hold uniformly for all arbitrarily large $x$ once $t$ is fixed.  If one only wants the range directly controlled by Theorem \ref{thm:main}, then Theorem \ref{thm:corollary} may instead be stated simply for $T^\varepsilon\le x\le\sqrt T$, in which case Lemma \ref{lem:large-x} is unnecessary.
\end{remark}

\section*{Acknowledgements}
The author acknowledges the use of OpenAI’s ChatGPT during the preparation of this
manuscript. Z. Dong is supported by  the National
	Natural Science Foundation of China (Grant No. 	1240011770). W. Wang is supported by the National
	Natural Science Foundation of China (Grant No. 1250012812). H. Zhang is supported by the Fundamental Research Funds for the Central Universities (Grant No. 531118010622), the National
	Natural Science Foundation of China (Grant No. 1240011979) and the Hunan Provincial Natural Science Foundation of China (Grant No. 2024JJ6120).

\end{document}